\newtheorem{theorem}{Theorem}[section]
\newtheorem{proposition}[theorem]{Proposition}
\newtheorem{definition}{Definition}
\newtheorem{corollary}[theorem]{Corollary}
\newtheorem{lemma}{Lemma}
\newtheorem{remark}{Remark}
\newtheorem{ithm}{Theorem}[section]
\DeclareMathOperator{\Ricci}{Ric}
\def\rm#1{\mathrm{#1}}
\def\cal#1{\mathcal{#1}}
\def\bb#1{\mathbb{#1}}
\def\lie#1{\mathfrak{#1}}
\newcommand{\h}{\frac{1}{2}}
\DeclareMathOperator{\Ad}{Ad}
\newcommand{\ga}{\textsl{g}}
\newcommand{\aga}{\textsl{h}}
\newcommand{\ov}[1]{\overline{#1}}
\DeclareMathOperator{\Ricd}{\mathcal{R}^{\rm{\, sec-Ric}}_d}
\DeclareMathOperator{\ricd}{\mathcal{R}^{\rm{\, Ric-scal}}_d}
\newcommand{\R}{{\mathbb{R}}}
\renewcommand{\email}[2][]{%
	\ifx\emails\@empty\relax\else{\g@addto@macro\emails{,\space}}\fi%
	\@ifnotempty{#1}{\g@addto@macro\emails{\textrm{(#1)}\space}}%
	\g@addto@macro\emails{#2}%
}
\title{The complete dynamics description of positively curved metrics in the Wallach flag manifold $\mathrm{SU}(3)/\mathrm{T}^2$}
\author{Leonardo F. Cavenaghi, Lino Grama, Ricardo M. Martins, and Douglas D. Novaes}
\email{leonardofcavenaghi@gmail.com, lino@ime.unicamp.br, rmiranda@ime.unicamp.br, ddnovaes@ime.unicamp.br}
\thanks{All authors have as institution address: Instituto de Matemática, Estatística e Computação Científica -- Unicamp, Rua Sérgio Buarque de Holanda, 651, 13083-859, Campinas, SP, Brazil}
\address{Instituto de Matemática, Estatística e Computação Científica -- Unicamp, Rua Sérgio Buarque de Holanda, 651, 13083-859, Campinas, SP, Brazil}
\author{Lino Grama}
\address{Instituto de Matemática, Estatística e Computação Científica -- Unicamp, Rua Sérgio Buarque de Holanda, 651, 13083-859, Campinas, SP, Brazil}
\email{lino@ime.unicamp.br}
\author{Ricardo M. Martins}
\address{Instituto de Matemática, Estatística e Computação Científica -- Unicamp, Rua Sérgio Buarque de Holanda, 651, 13083-859, Campinas, SP, Brazil}
\email{rmiranda@ime.unicamp.br}
\author{Douglas D. Novaes}
\address{Instituto de Matemática, Estatística e Computação Científica -- Unicamp, Rua Sérgio Buarque de Holanda, 651, 13083-859, Campinas, SP, Brazil}
\email{ddnovaes@ime.unicamp.br}
\begin{document}
\begin{abstract}
The family of invariant Riemannian manifolds in the Wallach flag manifold $\mathrm{SU}(3)/\mathrm{T}^2$ is described by three parameters $(x,y,z)$ of positive real numbers. By restricting such a family of metrics in the \emph{tetrahedron} $\cal{T}:= x+y+z = 1$, in this paper, we describe all regions $\cal R \subset \cal T$ admitting metrics with curvature properties varying from positive sectional curvature to positive scalar curvature, including positive intermediate curvature notion's. We study the dynamics of such regions under the \emph{projected Ricci flow} in the plane $(x,y)$, concluding sign curvature maintenance and escaping. In addition, we obtain some results for positive intermediate Ricci curvature for a path of metrics on fiber bundles over $\mathrm{SU}(3)/\mathrm{T}^2$, further studying its evolution under the Ricci flow on the base.
\end{abstract}

\maketitle

\section{Introduction}

Since the paper of B\"ohm and Wilking \cite{BW}, which uses Ricci flow techniques to prove a sort of converse to the classical Bonnet--Meyers theorem, a folkloric aspect emerged on the dynamics of invariant metrics in the Wallach flag manifold $\mathrm{SU}(3)/\mathrm{T}^2$. To know, on the one hand, Theorem C in \cite{BW} deals with the compact manifold $M = \mathrm{Sp}(3)/\mathrm{Sp}(1)\times \mathrm{Sp}(1)\times \mathrm{Sp}(1)$, ensuring the existence of an invariant metric on such a homogeneous space which has positive sectional curvature and evolves under the so-called homogeneous Ricci flow in a metric with mixed Ricci curvature. %The invariant metric of $\mathrm{Sp}(3)$ induces on $M$ a homogeneous unit volume Einstein metric $\ga_E$ of non-negative sectional curvature. Associated to such homogeneous space, one can consider the fibration $\mathrm{S}^4\hookrightarrow M\rightarrow \bb{H}P^2$ and combining a \emph{canonical variation} on the Riemannian submersion metric obtained out $\ga_E$ plus scaling; resulting in a curve $\ga_t,~t>1$ of unit volume submersion metrics with positive sectional curvature for which $\ga_0 = \ga_E$. Up to parametrization, such a curve is a solution to the normalized Ricci flow. A precise analysis of the asymptotic behavior of solutions of the Ricci flow allows the authors to prove that for any homogeneous non-submersion initial metric, being close enough to $\ga_2$, the normalized Ricci flow evolves mixed Ricci curvature. 
On the other hand, however, the very prolific analysis in B\"ohm--Wilking's paper cannot be straightforwardly applied to the manifold $\mathrm{SU}(3)/\mathrm{T}^2$, and Remark 3.2 in \cite{BW} states the existence of an invariant metric with positive Ricci curvature on the flag manifold $\mathrm{SU}(3)/\mathrm{T}^2$ that evolves under the homogeneous Ricci flow to a metric with mixed Ricci curvature. Some works appeared later, seeking to give different descriptions for such a metric evolution: \cite{wallachrelated, Abiev2016}. We also observe that the study of geometric flows of invariant geometric structures on homogeneous spaces and Lie groups is a classic topic in Differential Geometry with recent developments. See, for instance, \cite{MR3653239,MR4151344,MR3957836,MR3964154,MR4105515,MR4244880,elauret} and references therein.

In this paper, via a different tool, we provide a complete description of each invariant positively curved metric in $\mathrm{SU}(3)/\mathrm{T}^2$ for every notion of positive curvature interpolating between positive sectional curvature to positive scalar curvature, further studying the dynamic evolution of such metrics under a projected homogeneous Ricci flow. Theorem \ref{ithm:main} below fully generalizes Theorem 1 in \cite{Abiev2016} for $\mathrm{SU(3)}/\mathrm{T}^2$ (there denoted by $W^6$), further extending Theorem 2 in the same reference for $W^6$, strengthen the results for all intermediate positive curvature notations. It also provides a complete description of Theorem 3 in \cite{Abiev2016} and fully generalizes \cite{cavenaghi2023dynamics}. In Theorem \ref{ithm:main} to interpolate between positive sectional curvature and positive Ricci curvature, we use the following concepts appearing in the literature, observing, however, that for the forthcoming definitions, there is no widely used notation/terminology. 

\begin{definition}
Given a point $p$ in a Riemannian manifold $(M,\ga)$, and a collection $v,v_1,\ldots,v_{d}$ of orthonormal vectors in $T_pM$, the $d^{\mathrm{th}}$-intermediate Ricci curvature at $p$ corresponding to this choice of vectors is defined to be $\Ricci_d(v) = \sum_{i=1}^{d} K(v,v_i),$ where $K$ denotes the sectional curvature of $\ga$.
\end{definition}

\begin{definition}[$d\mathrm{th}$-intermediate positive Ricci curvature]\label{def:lm}
    We say that a Riemannian manifold $(M,\ga)$ has positive $d\mathrm{th}$-Ricci curvature if for every $p\in M$ and every choice of non-zero $d+1$-vectors $\{v,v_1,\ldots,v_d\}$ where $\{v_1,\ldots,v_d\}$ can be completed to generated an orthonormal frame in $T_pM$, it holds $\Ricci_d(v) > 0$. 
\end{definition}

It is remarkable that for an $n$-dimensional manifold, these curvatures interpolate between positive sectional curvature and positive Ricci curvature for $d$ ranging between $1$ and $n-1$. Quoting \cite{lm}, the quantity presented in Definition \ref{def:lm} has been called ``$d\mathrm{th}$-intermediate Ricci curvature'', ``$d\mathrm{th}$-Ricci curvature'', ``$d$-dimensional partial Ricci curvature'', and ``$d$-mean curvature''. Considering this, we define
\begin{definition}\label{defi:sec-ric}
    Let $M^n$ be a $n$-dimensional manifold and fix $d\in \{1,\ldots,n-1\}$. We denote by $\Ricd$ the set of all admissible Riemannian metrics in $M^n$ which satisfies Definition \ref{def:lm}, that is, that have $d\mathrm{th}$-intermediate positive Ricci curvature.
\end{definition}

All invariant Riemannian metrics in the Wallach flag manifold $\mathrm{SU}(3)/\mathrm{T}^2$ can be described by three positive parameters. Hence, we can abuse notation and denote an invariant metric $\ga$ in $\mathrm{SU}(3)/\mathrm{T}^2$ by $\ga = (x,y,z)$. On the other hand, we will always assume that $x+y+z = 1$, so we only have two-parameter describing any invariant Riemannian metric, thus adopting the convention $\ga = (x,y,z) = (x,y,1-x-y) \equiv (x,y)$. We prove:

%\begin{ithm}\label{ithm:main}
%Let $\mathrm{SU}(3)/\mathrm{T}^2$ be the $6$-dimensional Wallach flag manifold. Then for any $d\in \{1,\ldots,5\}$ the set $\Ricd$ is non-empty. Moreover, fixed $d \in \{1,2,3,5\}$, there exists an invariant Riemannian metric $\ga = (x_0,y_0) \in \Ricd$ such that:\begin{enumerate}[(a)]\item the projected homogeneous Ricci flow $\ga(t) = (x(t),y(t))$ with initial condition $\ga(0) = \ga$ belongs to $\Ricd$ for $t$-sufficiently small\item there exists $t^* \in \bb R$ such that $\ga(t) \not\in \Ricd$ for every $t$ in a neighborhood of $t^*$\item \label{item:invariance} fixed $d\in \{4,5\}$ there are regions $\cal R_d\subseteq \Ricd$ such that for any invariant Riemannian metric $\ga = (x_0,y_0) \in \cal R_d$, the projected homogeneous Ricci flow $\ga(t)$ with initial condition $\ga(0) = \ga$ lies in $\cal R_d$ for every $t\in \bb R$. Moreover, for $d = 4$ we have $\cal R_4 = \mathcal{R}^{\rm{\, sec-Ric}}_4$ (which is open) and the corresponding region $\cal R_5$ for $d = 5$ is the closure 
%    \begin{equation}
%     \overline{\mathcal{R}^{\rm{\, sec-Ric}}_{4}} = \cal R_5
%    \end{equation}
%\end{enumerate}
%\end{ithm}

\begin{ithm}\label{ithm:main}
Let $\mathrm{SU}(3)/\mathrm{T}^2$ be the $6$-dimensional Wallach flag manifold. Then for any $d\in \{1,\ldots,5\}$ the set $\Ricd$ is non-empty. Moreover, 
\begin{enumerate}[(a)]
    \item for each $d \in \{1,2,3,5\}$, there exists an invariant Riemannian metric $\ga = (x_0,y_0) \in \Ricd$ and $t^* \in \bb R$ such that the projected homogeneous Ricci flow $\ga(t) = (x(t),y(t))$ with initial condition $\ga(0) = \ga$ belongs to $\Ricd$ for $t$-sufficiently small and $\ga(t) \not\in \Ricd$ for every $t$ in a neighborhood of $t^*$;
    \item \label{item:invariance} for each $d\in \{4,5\}$, there exists a region $\cal R_d\subseteq \Ricd$ such that for any invariant Riemannian metric $\ga = (x_0,y_0) \in \cal R_d$, the projected homogeneous Ricci flow $\ga(t)$ with initial condition $\ga(0) = \ga$ lies in $\cal R_d$ for every $t\in \bb R$. Moreover,  $\cal R_4 = \mathcal{R}^{\rm{\, sec-Ric}}_4$ (which is open) and $\cal R_5=\overline{\cal R_4}\setminus\{(1/2,0),(0,1/2),(1/2,1/2)\}\subsetneq \mathcal{R}^{\rm{\, sec-Ric}}_5$.
\end{enumerate}
\end{ithm}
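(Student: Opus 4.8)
The plan is to reduce every assertion to the qualitative analysis of one planar rational vector field. Fix the isotropy decomposition $\mathfrak{su}(3)=\mathfrak t^2\oplus\mathfrak m_1\oplus\mathfrak m_2\oplus\mathfrak m_3$, with $\dim\mathfrak m_i=2$ and $[\mathfrak m_i,\mathfrak m_j]\subseteq\mathfrak m_k$, write the invariant metric as $\ga=(x,y,z)$, and record the classical closed forms for its curvature. In particular the Ricci endomorphism is $\mathrm{diag}(r_1(\ga),r_2(\ga),r_3(\ga))$ with $r_1,r_2,r_3$ explicit rational functions of $(x,y,z)$; the homogeneous Ricci flow $\partial_t\ga=-2\Ricci(\ga)$ is the system $\dot x_i=-2x_ir_i$; and normalizing so that $x+y+z=1$ yields the \emph{projected homogeneous Ricci flow}, a smooth rational vector field $X=(f,g)$ on $\cal T=\{(x,y):x>0,\ y>0,\ x+y<1\}$, whose interior singularities are the four invariant Einstein metrics — the normal metric $(1/3,1/3)$ and the three K\"ahler--Einstein metrics $(1/4,1/4)$, $(1/4,1/2)$, $(1/2,1/4)$. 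These Einstein metrics and the behaviour of $X$ near $\partial\cal T$ govern everything that follows.

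The first substantive step is to describe each $\Ricd$ as an explicit semialgebraic subset of $\cal T$. For a $\ga$-unit vector $v$ let $Q_{\ga,v}(w)=\langle R(v,w)w,v\rangle$ be the directional-curvature form on $v^\perp$; then $\Ricci_d(v)$, minimized over the admissible $v_i$, equals the sum of the $d$ smallest eigenvalues of $Q_{\ga,v}$, and one first notes the elementary fact that a positive such sum forces the $d$-th smallest eigenvalue to be positive too, so that $\Ricd\subseteq\mathcal{R}^{\rm{\, sec-Ric}}_{d+1}$ for every $d$. Using $\mathrm{SU}(3)$-invariance and the $S_3$-symmetry permuting $\mathfrak m_1,\mathfrak m_2,\mathfrak m_3$, the vector $v$ may be put in a three-parameter normal form (the $\ga$-lengths of its components in the three summands, together with one residual angle), in which $Q_{\ga,v}$ is an explicit symmetric $5\times5$ matrix rational in $(x,y)$ and the normal-form parameters; minimizing the relevant eigenvalue sums over $v$ — the extrema occurring at distinguished directions, such as a coordinate direction or a direction bisecting two summands — rewrites ``$\ga\in\Ricd$'' as a finite system of polynomial inequalities in $(x,y)$. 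This identifies $\mathcal{R}^{\rm{\, sec-Ric}}_1$ with the open medial triangle $\{x,y,1-x-y\in(0,1/2)\}$ (the classical positively curved region, cf.\ \cite{Abiev2016, cavenaghi2023dynamics}), exhibits $\mathcal{R}^{\rm{\, sec-Ric}}_4$ as an explicit open set whose closure meets $\partial\cal T$ exactly at $(1/2,0),(0,1/2),(1/2,1/2)$, and gives $\mathcal{R}^{\rm{\, sec-Ric}}_5=\{r_1(\ga)>0,\ r_2(\ga)>0,\ r_3(\ga)>0\}$, the positive--Ricci region (strictly larger than $\overline{\mathcal{R}^{\rm{\, sec-Ric}}_4}$). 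Non-emptiness of every $\Ricd$ is then immediate, the normal metric $(1/3,1/3)$ being positively curved.

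For item (a) with $d\in\{1,2,3,5\}$ the plan is a transversality argument. From the description of $\partial\Ricd$ one locates a boundary arc $\{h(x,y)=0\}$ across which $X$ is transverse and points out of $\Ricd$; choosing $\ga=(x_0,y_0)\in\Ricd$ on the inner side and close to that arc, one checks $\tfrac{d}{dt}\big|_{t=0}h(\ga(t))=\langle\nabla h,X\rangle<0$ there, so that by (relative) openness of $\Ricd$ the orbit $\ga(t)$ stays in $\Ricd$ for $t$ small, exits at a first time $t^*$, and remains outside $\Ricd$ on a whole neighbourhood of $t^*$. The exclusion of $d=4$ here is forced by item (b), and the case $d=5$ recovers the B\"ohm--Wilking phenomenon of Remark~3.2 in \cite{BW}.

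For item (b) the plan is to establish forward and backward invariance of $\mathcal{R}^{\rm{\, sec-Ric}}_4$ via Nagumo's subtangentiality criterion: along every boundary arc $\{h=0\}$ of $\mathcal{R}^{\rm{\, sec-Ric}}_4$ one must have $\langle\nabla h,X\rangle\le 0$ in the inward orientation, and the opposite inequality for $-X$, which makes $\mathcal{R}^{\rm{\, sec-Ric}}_4$ invariant for all $t\in\bb R$; passing to closures keeps $\overline{\mathcal{R}^{\rm{\, sec-Ric}}_4}$ invariant, and since the only points of this closure lying on $\partial\cal T$ — hence outside the domain of $X$ — are the degenerate metrics $(1/2,0),(0,1/2),(1/2,1/2)$, the set $\cal R_5=\overline{\cal R_4}\setminus\{(1/2,0),(0,1/2),(1/2,1/2)\}$ is invariant, consists of positive--Ricci metrics, and is a proper subset of $\mathcal{R}^{\rm{\, sec-Ric}}_5$, all by the previous step. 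The main obstacle is precisely the combination of that semialgebraic description with this last sign analysis: one must prove that the minimum of $\Ricci_d(v)$ is genuinely realized at one of the finitely many normal-form directions — so that the description of $\Ricd$ is exact — and then verify the inward/tangency inequalities along the \emph{entire} boundary of $\mathcal{R}^{\rm{\, sec-Ric}}_4$ with no leak, which is delicate near the K\"ahler--Einstein singularities of $X$, where $X$ is nearly tangent to that boundary.
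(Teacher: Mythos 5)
Your reduction step is where the proposal breaks down. The exact determination of the regions $\Ricd$ is the heart of the matter --- you yourself flag it as the main obstacle --- and it is not carried out; moreover, in the one case where you announce an explicit answer it is wrong. The region for $d=1$ is \emph{not} the open medial triangle $\mathcal{T}_c^{\circ}=\{x<1/2,\ y<1/2,\ x+y>1/2\}$: already the coordinate $2$-plane curvature $K(X_1,X_3)=-\tfrac{3}{16}\tfrac{z}{xy}+\tfrac{1}{8x}+\tfrac{1}{8y}+\tfrac{1}{16}\tfrac{(x-y)^2}{xyz}$ of Table \ref{table:1} is negative at $(x,y,z)=(0.26,0.26,0.48)$, a point of the medial triangle, so the $d=1$ region is strictly smaller, and its boundary touches $\partial\mathcal{T}_c$ only at $L,M,N$ --- precisely the tangency structure the paper's argument exploits. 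The misidentification is fatal to your plan for item (a) with $d=1$: the open medial triangle is exactly $\mathcal{T}_c^{\circ}$, which is invariant under the projected flow (its boundary is a union of the equilibria $L,M,N,R,S,T$ and heteroclinic orbits), so no boundary arc with outward-pointing field exists, and with your description statement (a) for $d=1$ would simply be false. (Your non-emptiness witness is also off: the normal metric $(1/3,1/3,1/3)$ does not have positive sectional curvature, by Berger's classification of normal homogeneous metrics or directly because the $2$-dimensional abelian subspace of traceless circulant skew-Hermitian matrices lies in $\mathfrak m$ and yields a flat plane; non-emptiness is of course true, but needs a different witness.)

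The dynamical half also needs the global information your plan avoids. For $d\in\{1,2,3\}$ the boundaries of $\Ricd$ are tangent to the flow at the attracting star nodes $L,M,N$ (tangent there to the heteroclinics $\ov{UL},\ov{UM},\ov{UN}$), so escape is not obtained by a local transversality computation at a conveniently chosen arc; the paper argues globally: every orbit starting in $\Ricd$ off these heteroclinics has $\omega$-limit in $\{L,M,N\}$, and the $C^1$ star-node linearization forbids it from converging to such a node while remaining in $\Ricd$. For $d=5$ the forward flow does not leave the positive-Ricci region --- near $L,M,N$ all three Ricci eigenvalues stay positive --- and the escape in the paper is backward in time, via $\alpha$-limits at the vertices $O,P,Q$ where positivity fails; your ``first exit time across an outward-transversal arc'' would have to be re-aimed at negative time and actually verified. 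Finally, for item (b) the efficient route is to prove that the $d=4$ region is exactly $\mathcal{T}_c^{\circ}$; invariance for all $t\in\R$ is then immediate because $\partial\mathcal{T}_c$ is itself invariant (equilibria plus heteroclinics), and $\overline{\mathcal{T}_c}\setminus\{L,M,N\}$ is invariant because $L,M,N$ are equilibria --- no Nagumo estimates ``delicate near the K\"ahler--Einstein points'' are needed. Your proposal defers exactly these identifications, so as written it does not prove the theorem.
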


The projected Ricci flow's global behavior and the sets' dynamics $\Ricd$ under the flow described in Theorem \ref{ithm:main} is illustrated in Figure \ref{fig:sec-ric}. 
 
Another related here-considered notion of \emph{intermeidate curvature condition} is
\begin{definition}\label{def:minimun}
Let $M^n$ be a $n$-dimensional Riemannian manifold and let $d\leq n$. We say that the Ricci tensor of $M$ is \emph{$d$-positive} if the sum of the $d$ smallest eigenvalues of the Ricci tensor is positive at all points.
\end{definition}

It is worth pointing out that if $d$ ranges from $1,\ldots,n$, the condition given by Definition \ref{def:minimun} interpolates between positive Ricci curvature and positive scalar curvature. Definition \ref{def:ours} below, to be further approached in Section \ref{sec:def2}, considers a generalization of Definition \ref{def:minimun}. We do not require that the Ricci tensor is $d$-positive. Instead, we look for positive combinations of the Ricci eigenvalues constrained by their multiplicity. Such a consideration encompasses Definition \ref{def:minimun}.
\begin{definition}\label{def:ours}
    Let $M^n$ be a $n$-dimensional Riemannian manifold. Fix $d\in \{1,\ldots,n\}$. We say that a Riemannian metric $\ga$ on $M$ has positive $d$-curvature if, denoting by $\lambda_1,\ldots,\lambda_l$ the distinct eigenvalues of the Ricci tensor $\Ricci(\ga)$ of $\ga$, for every collection of non-negative integers $\{(a_1,\ldots,a_l) \in \bb N^l : a_1 +\ldots+a_l = d,~ 0\leq a_j\leq \mu(\lambda_j)\}$ where $\mu(\lambda_j)$ denotes the algebraic multiplicity of $\lambda_j$,
    \[\sum_{j=1}^la_j\lambda_j > 0.\]
\end{definition}
\begin{definition}\label{defi:ric-scal}
    Let $M^n$ be a $n$-dimensional manifold. Fix $d\in \{1,\ldots,n\}$. We denote the set of all admissible Riemannian metrics $\ga$ on $M$ satisfying Definition \ref{def:ours} by $\ricd$.
\end{definition}

\begin{remark}
    Cautiousness must be taken since $d$-positivity of the Ricci tensor (Definition \ref{def:minimun}) is denoted by $\Ricci_d>0$ in \cite{crowley2020intermediate}. See \cite[Section 2.2]{DGM} or \cite[p. 5]{crowley2020intermediate} for further information.
\end{remark}

We prove:
\begin{ithm}\label{ithm:main3}
    Let $\mathrm{SU}(3)/\mathrm{T}^2$ be the $6$-dimensional Wallach flag manifold. Then for any $d\in \{1,\ldots,6\}$ the set $\ricd$ is non-empty. Moreover, for each $d \in \{1,\ldots,6\}$, 
\begin{enumerate}[(a)]
    \item there exists an invariant Riemannian metric $\ga = (x_0,y_0) \in \ricd$ and $t^*\in  \R$ such the projected homogeneous Ricci flow $\ga(t) = (x(t),y(t))$ with initial condition $\ga(0) = \ga$ belongs to $\ricd$ for $t$-sufficiently small
    and $\ga(t) \not\in \ricd$ for every $t$ in a neighborhood of $t^*$;
    \item there exists an open region $\cal R\subsetneq \ricd$ such that for any $\ga = (x_0,y_0) \in \cal R$, the homogeneous Ricci flow $\ga(t)$ with initial condition $\ga(0) = \ga$ lies in $\cal R_d$ for every $t\in \bb R$.
\end{enumerate}
\end{ithm}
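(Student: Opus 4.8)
The plan is to reduce everything to an explicit two-variable computation, exactly as the parallel Theorem~\ref{ithm:main} does for the intermediate Ricci curvatures. The key input is a closed formula for the Ricci eigenvalues of the invariant metric $\ga=(x,y,1-x-y)$ on $\mathrm{SU}(3)/\mathrm{T}^2$. Recall that the isotropy representation of $\mathrm{SU}(3)/\mathrm{T}^2$ splits into three inequivalent $2$-dimensional summands $\fm_1\oplus\fm_2\oplus\fm_3$ with all structure constants $[\fm_i]\!\cdot\![\fm_j]$ equal (the Wallach symmetry), so the Ricci tensor is again diagonal with three eigenvalues, each of multiplicity $2$, of the form
\[
r_1 = \frac{1}{2x}+\frac{x}{2yz}\Big(\text{resp. cyclic}\Big),
\]
with the precise coefficients fixed by the normalization of the Killing form. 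I would first record this formula (citing the standard references \cite{wallachrelated,Abiev2016} or re-deriving it from the general homogeneous Ricci formula), so that $\ricd$ becomes the explicit semialgebraic set of $(x,y)$ in the open triangle $x,y>0,\ x+y<1$ on which every sum of $d$ of the quantities $r_1,r_1,r_2,r_2,r_3,r_3$ — subject to each being used at most twice — is positive. For $d=6$ this is just the positive scalar curvature condition $r_1+r_2+r_3>0$, which holds on the whole triangle; for $d=1$ it is $\min_i r_i>0$, i.e.\ the positive-Ricci region; intermediate $d$ give the nested chain in between. Non-emptiness for every $d$ follows because the normal (Killing) metric $x=y=z=1/3$ is normal homogeneous, hence has $r_1=r_2=r_3>0$, so it lies in $\ricd$ for all $d$.

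\textbf{For part (a)}, I would exploit the same mechanism used for Theorem~\ref{ithm:main}: pick an initial metric lying on (or near) a boundary face of $\ricd$, and show the projected Ricci flow crosses that face. The projected homogeneous Ricci flow in the $(x,y)$-plane is the planar ODE obtained by normalizing $x+y+z=1$ along the flow; its fixed points are the normal metric and the three Kähler–Einstein metrics (the permutations of a point like $(1/2,1/4)$), and its phase portrait is completely understood — every non-Einstein trajectory runs from the unstable normal metric toward one of the (also unstable, in the volume-normalized flow) "collapsed" corners $(1/2,0),(0,1/2),(1/2,1/2)$ on the boundary. Since those corner limits have one parameter going to $0$, one Ricci eigenvalue blows up to $+\infty$ and another to $-\infty$; in particular every trajectory eventually leaves the positive-scalar-curvature region's interior on the relevant slices, so it certainly leaves each $\ricd$ with $d<6$. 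Concretely, for each $d$ I would choose $\ga(0)$ inside $\ricd$ but so that the forward trajectory first touches $\partial\ricd$ transversally at some finite $t^*$ (this is where the explicit ODE and the explicit inequalities meet); transversality gives the "neighborhood of $t^*$" escaping statement. For $d=6$, where $\ricd$ is the whole triangle, I instead run the flow \emph{backward}, or equivalently note that $\ricd=\mathcal{T}$'s boundary is reached in the collapsing limit; one picks $t^*$ to be a time at which the normalized metric is close enough to a corner that $r_1+r_2+r_3<0$ fails — wait, $r_1+r_2+r_3$ (scalar curvature) of a homogeneous space need not stay positive, and near the corners it does go negative, so the same transversality argument applies verbatim.

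\textbf{For part (b)}, the claim is the existence of an \emph{open} forward-invariant region $\cal R\subsetneq\ricd$ — the natural candidate for each $d$ is the basin of one of the Kähler–Einstein fixed points, or more robustly a sublevel set of a Lyapunov-type function for the planar flow (for instance a scale-invariant combination of the $r_i$, or the functional whose gradient flow the normalized Ricci flow is, namely the scalar curvature restricted to the volume-one slice). Concretely I would take $\cal R$ to be a small flow-invariant neighborhood of the attracting Kähler–Einstein point (attracting within its stable manifold) intersected with the interior of $\ricd$; since Kähler–Einstein metrics on the flag have positive Ricci, they lie in the interior of $\ricd$ for every $d$, and by continuity a whole open neighborhood does too, while forward-invariance is automatic once the neighborhood is chosen inside the stable set and the inequalities defining $\ricd$ are checked to persist along the incoming trajectories. \textbf{The main obstacle} is the last checking step: verifying that the explicit (rational) inequalities $\sum a_j\lambda_j>0$ are preserved along \emph{all} forward trajectories in the candidate region, rather than just near the fixed point — this is a genuine interaction between the nonlinear planar dynamics and a system of semialgebraic inequalities, and handling it uniformly in $d$ (rather than case by case for $d=1,\dots,6$) is where the real work lies; I expect to reduce it to a finite number of sign conditions on a polynomial obtained by eliminating $t$, possibly aided by the fact that the worst eigenvalue along a trajectory is monotone once the trajectory enters a suitable cone around the Kähler–Einstein direction.
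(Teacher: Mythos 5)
Your overall framework (reduce to the explicit planar inequalities $h_{a,b,c}(x,y)=a r_x+b r_y+c r_z>0$ on the triangle and play them against the known phase portrait of the projected flow) is the paper's framework, but both parts of your dynamical argument contain genuine errors. For part (a), your escape mechanism for $d<6$ rests on the claim that at the collapsed limit points $(1/2,0)$, $(0,1/2)$, $(1/2,1/2)$ one Ricci eigenvalue blows up to $+\infty$ and another to $-\infty$, so that every forward trajectory eventually leaves even the positive scalar curvature region. This is false. With the actual components $r_x=\tfrac{1}{2x}+\tfrac{1}{12}\bigl(\tfrac{x}{yz}-\tfrac{z}{xy}-\tfrac{y}{xz}\bigr)$ and its cyclic analogues (note your trial formula $\tfrac{1}{2x}+\tfrac{x}{2yz}$ has no negative terms and would force positive Ricci everywhere, contradicting even your own $d=6$ discussion), along the invariant segment $x=z$ approaching $(1/2,0)$ one computes $r_y=\tfrac{1}{3y}+\tfrac{y}{12x^2}\to+\infty$ while $r_x=r_z=\tfrac{1}{2x}-\tfrac{y}{12x^2}\to 1>0$; in general the divergent eigenvalue at these points is $+\infty$ and the other two stay bounded. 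Indeed the open central triangle $\mathcal T_c^{\circ}$ with vertices $(1/2,0),(1/2,1/2),(0,1/2)$ is flow-invariant and contained in every $\ricd$, so forward trajectories starting there never leave, and your argument produces no $t^*$ for $d<6$. The mechanism that actually works — and which you only invoke for $d=6$ — is the backward one, used uniformly in the paper: $\ricd\setminus\mathcal T_c\neq\emptyset$ (read off from the curves $h_{a,b,c}=0$), and for such an initial condition the $\alpha$-limit is one of the vertices $O,P,Q$ of $\mathcal T$, near which even the scalar curvature is negative; hence the backward orbit spends a whole time interval outside $\ricd$, giving the required $t^*$ and the "neighborhood of $t^*$" statement without any transversality discussion.

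For part (b), your candidate region cannot work: in the projected flow the Kähler--Einstein points $R=(1/4,1/4)$, $S=(1/2,1/4)$, $T=(1/4,1/2)$ are saddles (eigenvalues $1/2$ and $-1/4$), so they have no open basin of attraction and no small forward-invariant open neighborhood, and the statement moreover demands invariance for \emph{all} $t\in\R$, i.e.\ in both time directions, which a Lyapunov sublevel set would not give either. The correct and much simpler choice, which your proposal never identifies, is $\mathcal R=\mathcal T_c^{\circ}$: its three sides are the unstable manifolds of the saddles $R,S,T$ (heteroclinic segments joining $(1/2,0),(1/2,1/2),(0,1/2)$), hence the open triangle is invariant in both time directions, and a direct check of the inequalities $h_{a,b,c}>0$ shows $\mathcal T_c^{\circ}\subset\ricd$ for every $d\in\{1,\ldots,6\}$ — which also settles, in one stroke, the "main obstacle" you flag (persistence of the inequalities along all forward trajectories) and the non-emptiness claim. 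A further minor correction: the projected flow on the closed triangle has ten equilibria, not four — the vertices $O,P,Q$ and the normal metric $U$ are unstable star nodes, the midpoints $(1/2,0),(1/2,1/2),(0,1/2)$ are \emph{stable} star nodes, and $R,S,T$ are saddles — and your description of the asymptotics (all trajectories emanating from $U$ toward "unstable collapsed corners") conflates these roles, which is precisely what led to the faulty escape argument above.
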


The projected Ricci flow's global behavior and the sets' dynamics $\Ricd$ under the flow described in Theorem \ref{ithm:main3} is illustrated in Figure \ref{fig:ric-scal}. 

Observe that Theorems \ref{ithm:main} and \ref{ithm:main3}, when considered in perspective, makes us wonder whether we could lift positive intermediate curvature notions to fiber bundles, as in \cite{reiser2022positive}. Pursuing positive answers, the following could also be seen as analogous to Theorem A in \cite{searle2015lift}.

\begin{ithm}\label{thm:associated}
   Let $\bar \pi : F\hookrightarrow M\rightarrow \mathrm{SU}(3)/\mathrm{T}^2$ be a fiber bundle with $\mathrm{SU}(3)$ as structure group. Assume that
   \begin{enumerate}[(a)]
       \item a principal orbit of the $\mathrm{SU}(3)$-action on $F$ has as isotropy subgroup a maximal closed subgroup
       \item fixed $1\leq d_F\leq \dim F-1$, $F$ admits a $\mathrm{SU}(3)$-invariant metric $\ga_F$ such that the induced metric $\bar \ga_F$ on the manifold part of quotient $F/\mathrm{SU}(3)$ lies in $\mathcal{R}^{\rm{\, sec-Ric}}_{d_F}$.
   \end{enumerate}
 Then $\bar \pi$ admits a one-parameter family of Riemannian submersion metrics $\ga_t\in \mathcal{R}^{\rm{\, sec-Ric}}_{d_F+1}$ for $|t|$ sufficiently small. Moreover, there exists $t^*\in \bb R$ such that $\ga_t\not\in \mathcal{R}^{\rm{\, sec-Ric}}_{d_F+1}$ for every $t$ in a neighborhood of $t^*$.
\end{ithm}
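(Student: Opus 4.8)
The plan is to realise $M$ as an associated bundle, equip it with a one-parameter family of connection metrics in the spirit of \cite{searle2015lift,reiser2022positive}, and then read off the intermediate Ricci curvature from the O'Neill formulas. Since the structure group is $\mathrm{SU}(3)$, write $M=P\times_{\mathrm{SU}(3)}F$ for a principal $\mathrm{SU}(3)$-bundle $P\to B:=\mathrm{SU}(3)/\mathrm{T}^2$. Fix once and for all: a bi-invariant metric $Q$ on $\mathfrak{su}(3)$; a principal connection $\theta$ on $P$, chosen with curvature form as non-degenerate (``fat'') as the topology of $P$ permits; an $\mathrm{SU}(3)$-invariant metric $g_B$ on $B$ of \emph{positive sectional curvature} --- such a metric exists by Theorem \ref{ithm:main} (the case $d=1$ of the non-emptiness of $\mathcal{R}^{\rm{\, sec-Ric}}_d$); and the invariant metric $g_F$ of hypothesis (b). The connection splits $TM=\mathcal H\oplus\mathcal V$ with $\mathcal H\cong\bar\pi^*TB$ and $\mathcal V$ tangent to the $F$-fibres, and $\mathcal V$ splits in turn as $\mathcal V^{\mathrm{orb}}\oplus\mathcal V^{\mathrm{quot}}$ into the tangent directions of the principal orbits $\mathrm{SU}(3)/K$ (here $K$ is the principal isotropy, maximal by (a)) and a complement modelled on $T(F/\mathrm{SU}(3))$. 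Let $\ga_t$ be the connection metric that restricts to $g_B$ on $\mathcal H$, to $\bar\ga_F$ on $\mathcal V^{\mathrm{quot}}$, and to $t^2$ times the $Q$-normal homogeneous metric on $\mathcal V^{\mathrm{orb}}$ (with a Cheeger-type correction along $\mathrm{SU}(3)$ keeping the orbit directions ``rigid''), the $\mathcal H$--$\mathcal V$ interaction being governed by $\theta$. By construction $\bar\pi\colon(M,\ga_t)\to(B,g_B)$ is a Riemannian submersion for every $t$.

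For the first assertion I would run O'Neill's formulas for the tower $M\to\hat B\to B$, where $\hat B:=P\times_{\mathrm{SU}(3)}(F/\mathrm{SU}(3))$ fibres over $B$ with fibre $(F/\mathrm{SU}(3),\bar\ga_F)$ while $M\to\hat B$ has the $\mathrm{SU}(3)$-orbits as fibres (the estimate being first carried out over the principal part, where $F/\mathrm{SU}(3)$ is a manifold, and extended across the lower-dimensional singular strata by continuity). Given a unit $v$ and orthonormal $v_1,\dots,v_{d_F+1}$ in $T_pM$, decompose each vector into its $\mathcal H$, $\mathcal V^{\mathrm{orb}}$, $\mathcal V^{\mathrm{quot}}$ parts and estimate $\sum_{i=1}^{d_F+1}K_{\ga_t}(v,v_i)$ pattern by pattern, using: horizontal planes satisfy $K_{\ga_t}(X,Y)=K_{g_B}(X,Y)+3\|A_XY\|^2\ge K_{g_B}(X,Y)>0$; the orbit fibres carry a (Cheeger-deformed) normal homogeneous metric, which is non-negatively curved with a controlled zero set; the $\mathcal V^{\mathrm{quot}}$ contributions reproduce, in the $t\to 0$ limit, the $d_F$-th intermediate Ricci of $\bar\ga_F\in\mathcal{R}^{\rm{\, sec-Ric}}_{d_F}$; and fatness of $\theta$ makes mixed $\mathcal H$--$\mathcal V^{\mathrm{orb}}$ planes strictly positively curved. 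One checks that the limiting sum $\sum_i K_{\ga_0}(v,v_i)$ is $\ge 0$, and that on the locus where it vanishes the frame is forced to contain a pair spanning either a base plane or a fat mixed plane; hence the sum is in fact $>0$. The bookkeeping ``$d_F$ directions supplied by $\bar\ga_F$, plus one extra supplied by the positively curved base'' is precisely what produces the shift from index $d_F$ to index $d_F+1$. A compactness argument over the orthonormal $(d_F+2)$-frame bundle of $M$ then upgrades this to $\ga_t\in\mathcal{R}^{\rm{\, sec-Ric}}_{d_F+1}$ for all $|t|$ below some $t_0>0$.

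For the ``moreover'' part one exhibits a value $t^*$ at which the deformation is too mild to still dominate the uncontrolled curvature: as $t$ increases the orbit fibres open up, the Cheeger rigidification weakens, and the part of $g_F$ that is not seen by $\bar\ga_F$ (which need not have positive $(d_F+1)$-th intermediate Ricci) resurfaces --- equivalently, letting the base metric run along the projected homogeneous Ricci flow of Theorem \ref{ithm:main} until it leaves $\mathcal{R}^{\rm{\, sec-Ric}}_{d_F+1}$ on $B$, the horizontal lift of the resulting bad $(d_F+1)$-frame has its intermediate Ricci sum dominated by the (now sufficiently negative) base contribution, the O'Neill $A$-terms staying uniformly bounded along a precompact stretch of the flow. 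Either way one gets $\ga_{t^*}\notin\mathcal{R}^{\rm{\, sec-Ric}}_{d_F+1}$, and continuity of the curvature tensor in $t$ promotes this to a whole neighbourhood of $t^*$.

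The main obstacle is the case analysis of the second paragraph: proving that the limiting intermediate Ricci sum is non-negative \emph{and} that its zero set is always absorbed by a base plane or a fat mixed plane --- i.e.\ that the positive-curvature index rises by exactly one, no more and no less. This forces one to combine three curvature computations of rather different nature --- O'Neill for the double submersion $M\to\hat B\to B$, the (Cheeger-deformed) homogeneous curvature of the orbit fibres $\mathrm{SU}(3)/K$, and the $\mathcal{R}^{\rm{\, sec-Ric}}_{d_F}$ hypothesis on $\bar\ga_F$ --- and to verify the count in each of the finitely many patterns by which $v,v_1,\dots,v_{d_F+1}$ distribute among $\mathcal H$, $\mathcal V^{\mathrm{orb}}$, $\mathcal V^{\mathrm{quot}}$. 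Hypothesis (a) enters exactly here: maximality of $K$ guarantees the orbit fibres are a homogeneous space $\mathrm{SU}(3)/K$ for which the Cheeger-deformed normal metric has the non-negativity and small zero set needed to close the argument; without it the mixed patterns could fail.
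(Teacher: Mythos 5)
On the first assertion your collapsing-the-orbit-directions construction is in the same spirit as the paper's fibrewise Cheeger deformation (Lemma \ref{thm:limitintermediate}), but two steps would fail as written. First, you lean on a ``fat'' principal connection to make the mixed $\mathcal H$--$\mathcal V^{\mathrm{orb}}$ planes strictly positively curved and to absorb the zero locus of the limiting curvature sum; fat connections need not exist on an arbitrary principal $\mathrm{SU}(3)$-bundle over $\mathrm{SU}(3)/\mathrm{T}^2$, and the theorem assumes nothing of the sort. In the paper the extra positivity does not come from the connection at all: hypothesis (a) enters through Lemma \ref{lem:propersubgroups}, which shows that the principal orbits $\mathrm{SU}(3)/H$, with $H$ maximal, have $\Ricci_2>0$ in the normal homogeneous metric, so the orbit bracket terms in the limit formula \eqref{eq:asymptotic} supply the missing positivity, and a canonical variation along the orbits makes that contribution arbitrarily large; this is what closes the combinatorial case analysis in Proposition \ref{prop:firstpart}. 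Second, ``extended across the lower-dimensional singular strata by continuity'' is not an argument: continuity only gives non-strict inequalities, and on the non-regular set $\bar\ga_F$ and your splitting of $\mathcal V$ degenerate. The paper treats this locus separately: by Lemma \ref{lem:propersubgroups} non-principal orbits are fixed points, and Lemma \ref{lem:blowingup} blows up the relevant curvature there by a finite Cheeger deformation of $\ga_F$.

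The ``moreover'' part is the larger gap. Your one-parameter family rescales only the orbit directions over a \emph{fixed} positively curved base, and you give no proof that this family ever leaves $\mathcal{R}^{\rm{\, sec-Ric}}_{d_F+1}$: the heuristic that the part of $\ga_F$ ``not seen by $\bar\ga_F$ resurfaces'' is not a mechanism, since nothing in the hypotheses prevents $\ga_F$ itself from being strongly positively curved, and the ``equivalently'' pivot to letting the base run along the projected Ricci flow replaces your family by a different one mid-argument, so the two halves of the statement would be proved for two different families. In the paper the family is built from the start on the base Ricci flow $\ga_B(t)$ of Theorem \ref{ithm:main}, lifted to connection metrics on $M$ and combined with an $s$-canonical variation, and the failure for $t$ beyond $t^*:=\inf\{t\geq 0:\ \exists X,\ \Ricci_{\ga_B(t),1}(X)\leq 0\}$ is obtained by the quadratic (discriminant) trick of \cite{CavenaghiSperanca22}, evaluating the discriminant of $p_{t,s}(\lambda)=\Ricci_{\ga_{t,s}}(X+\lambda(X_F+U^*))$ as $s\to-\infty$ at a direction $X$ with $\Ricci_{1,t}(X)=0$; no uniform bound on O'Neill $A$-terms along the flow is invoked or needed. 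To repair your sketch you would have to either prove such a failure mechanism for the fiber-rescaling family or adopt a single family in which the base degenerates, as the paper does.
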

It is worth noticing that the hypotheses in Theorem \ref{thm:associated} restrict a lot of the possible principal orbits for the $\mathrm{SU}(3)$-action in $F$. To now, $F = \mathrm{SU}(3)/\mathrm{SO}(3),~\mathrm{SU}(3)/\mathrm{T}^2,~\mathrm{SU}(3)/\mathrm{S}(\mathrm{U}(2)\times \mathrm{U}(1))$ and $\mathrm{SU}(3)$ over a discrete maximal closed subgroup, see Lemma \ref{lem:propersubgroups}.

Much work related to Definition \ref{def:lm} has been appearing, and recent attention to this subject can be noticed. For a complete list of references on the subject, we recommend \cite{lm}. Part of the idea of this paper was conceived looking at the examples built in \cite{DGM} and explored in \cite{davide}, studying the evolution of intermediate positive Ricci curvature (in the sense of Definition \ref{def:lm}) on some \emph{generalized Wallach spaces}, under the homogeneous Ricci flow. Their analyses closely follow the techniques developed in \cite{BW}. Of particular interest are also \cite{kennard2023positive, davide, reiser2022positive, reiser2022intermediate, Mouille2022}.

%We also observe that the study of geometric flows of invariant geometric structures on homogeneous spaces and Lie groups is a classic topic in Differential Geometry with recent developments. See, for instance, \cite{MR3653239,MR4151344,MR3957836,MR3964154,MR4105515,MR4244880,elauret} and references therein.

\section{The curvature formulae in $\mathrm{SU}(3)/\mathrm{T}^2$}

 We first give an explicit \emph{Weyl basis} to $T_o\mathrm{SU}(3)/\mathrm{T}^2$ where $o = e\mathrm{T}^2$, being $e\in \mathrm{SU}(3)$ the unit element. More precisely, from the level of Lie algebra, let $\mathfrak{sl}(3;\mathbb{C})$ be the complex semisimple Lie algebra with compact real form $\lie{su(3)}$, the Lie algebra of $\mathrm{SU}(3)$. It can be checked that $\mathfrak{t}^2$ is a \emph{Cartan sub-algebra} of $\lie{sl}(3;\bb C)$, where $\lie{t}^2 =: \mathrm{Lie}(\mathrm{T}^2) = T_e\mathrm{T}^2$.
 
Since $\lie{t}^2$ is Abelian and the adjoint representation $\mathrm{ad}: \lie{sl}(3;\bb C) \rightarrow \lie{gl}(\lie{sl}(3;\bb C)$ enjoys the property that the image $\mathrm{ad}(\lie{t}^2)$ consists of \emph{semisimple operators}, being therefore simultaneously diagonalizable. Considering this, we thus decompose $\lie{sl}(3;\bb C)$ via appropriate invariant subspaces
\begin{equation}
    \lie{sl}(3;\bb C) = \bigoplus_{\lambda \in {\lie{t}^2}^*}{\lie{sl}(3;\bb C)}_{\lambda},
\end{equation}
where
\begin{equation}
    {\lie{sl}(3;\bb C)}_{\lambda} := \ker \left\{\mathrm{ad}(\lie{t}^2) - \lambda(\lie{t}^2) 1 : \lie{sl}(3;\bb C) \rightarrow \lie{sl}(3;\bb C)\right\}, 
\end{equation}
for $h \in \lie{t}^2$. Hence, \emph{a roots' set} is nothing but $\Pi := \left\{\alpha \in {\lie{t}^2}^{*}\setminus \{0\} : {\lie{sl}(3;\bb C)}_{\alpha} \not\equiv 0  \right\}$. 

We can extract from $\Pi$ a subset $\Pi^{+}$ completely characterized by both:
\begin{enumerate}[(i)]
    \item for each root $\alpha\in \Pi$ only one of $\pm \alpha$ belongs to $\Pi^{+}$
    \item for each $\alpha, \beta \in \Pi^{+}$ necessarily $\alpha + \beta \in \Pi^{+}$ if $\alpha + \beta \in \Pi$.
\end{enumerate}
To the set $\Pi^{+}$, we name \emph{subset of positive roots}. We say that the subset $\Sigma \subset \Pi^{+}$ consists of the \emph{simple roots system} if it collects the positive roots, which can not be written as a combination of two elements in $\Pi^{+}$.

Since $\lie h$ is an Abelian Lie sub-algebra, we can pick a basis $\{ H_\alpha: \alpha \in \Sigma\}$ to $\lie h$ and complete it with $\{ X_\alpha \in {\mathfrak{sl}(3;\mathbb{C})}_\alpha: \alpha \in \Pi \}$, generating what is called a \textit{Weyl basis} of $\mathfrak{sl}(3;\mathbb{C})$: For any $\alpha, \beta \in \Pi$
\begin{enumerate}
    \item $\cal K(X_{\alpha},X_{-\alpha}) := \mathrm{tr}~(\mathrm{ad}(X_{\alpha}\circ \mathrm{ad}(X_{-\alpha})) = 1$
    \item $[X_{\alpha},X_{\beta}] = m_{\alpha,\beta}X_{\alpha+\beta},~m_{\alpha,\beta}\in \mathbb{R}$,
\end{enumerate}
where $\cal K$ is the \emph{Cartan--Killing} form of $\lie g$, see \cite[p. 214]{San_Martin_2021}. Then $\mathfrak{sl}(3;\mathbb{C})$ decomposes into root subspaces:
\begin{equation*}
	\mathfrak{sl}(3;\mathbb{C})=\mathfrak{t}^2 + \sum_{\alpha \in \Pi} \mathbb{C} X_\alpha.
\end{equation*}
It is straightforward from \cite[Theorem 11.13, p. 224]{San_Martin_2021} that such a decomposition implies that the compact real form of $\mathfrak{su(3)}$ is given by
\begin{equation}\label{eq:compactrealform}
	\mathfrak{su(3)}=\sqrt{-1}\mathfrak{t}^2_{\mathbb{R}} + \sum_{\alpha \in \Pi^+} \mathrm{Span}_\mathbb{R} (X_\alpha + X_{-\alpha}, \sqrt{-1}(X_\alpha - X_{-\alpha})).
\end{equation}

Given the above information, it can be directly checked that a basis for the Lie algebra of $\mathrm{SU}(3)$ is given by
$$
     \h\mathrm{diag}(2\mathrm{i},-\mathrm{i},\mathrm{i})
     ,\ \h\mathrm{diag}(0,\mathrm{i},-\mathrm{i})
     , \ \h\mathrm{A}_{12}
     , \ \h\mathrm{S}_{12}
     , \ \h\mathrm{A}_{13}
     , \ \h\mathrm{S}_{13}
     , \ \h\mathrm{A}_{23}
     , \ \h\mathrm{S}_{23},
$$
% \begin{align*}
%      \mathrm{diag}(2i,-\mathrm{i},\mathrm{i})\\
%      \mathrm{A}_{12}\\
%      \mathrm{S}_{12}\\
%      \mathrm{A}_{13}\\
%      \mathrm{S}_{13}\\
%      \mathrm{diag}(0,\mathrm{i},-\mathrm{i})\\
%      \mathrm{A}_{23}\\
%      \mathrm{S}_{23},
% \end{align*}
where $\mathrm{S}_{kj}$ is a symmetric matrix $3\times 3$ with $\mathrm{i}$ in inputs $kj$ and $jk$ and $0$ in the others. On the other hand, $\mathrm{A}_{jk}$ is an antisymmetric matrix $3\times 3$ that has $1$ on input $kj$ and $-1$ on input $jk$, $0$ elsewhere. Moreover, $\mathrm{i} = \sqrt{-1}$. Hence, we can extract a basis for the tangent space $T_{o}\mathrm{SU}(3)/\mathrm{T}^2$ by disregarding the matrices $\mathrm{diag}(2i,-\mathrm{i}, \mathrm{i})$ and $\mathrm{diag}(0, \mathrm{i},-\mathrm{i})$. Furthermore, the $3$ components of the isotropy representation are generated by \[\mathrm{span}_{\bb R}\left\{\h\mathrm{A}_{jk},\h\mathrm{S}_{jk}\right\}.\]

 We digress a bit recalling that whenever a homogeneous space $M=G/K$ is \textit{reductive}, with reductive decomposition $\mathfrak{g}=\mathfrak{k}\oplus\mathfrak{m}$ (that is, $[\mathfrak{k},\mathfrak{m}]\subset \mathfrak{m}$), then $\lie m$ is $\Ad_G(K)$-invariant. Moreover, the map $\lie g \to T_b(G/K)$ that assigns to $X \in \lie g$ the induced tangent vector \[X \cdot b = d/dt (\exp(tX)b) |_{t=0}\] is surjective with kernel the isotropy subalgebra $\lie k$. Using that $g \in G$ acts in tangent vectors by its differential, we have that
\begin{equation}
\label{eq-induzido}
g ( X \cdot b ) = ( \Ad(g)X ) \cdot g b.
\end{equation}
Hence, the restriction $\lie m \to T_b(G/K)$ of the above map is a linear isomorphism that intertwines the isotropy representation of $K$ in $T_b(G/K)$ with the adjoint representation of $G$ restricted to $K$ in $\lie m$. This allows us to identify $T_b(G/K) = \lie m$ and the $K$-isotropy representation with the $\Ad_G(K)$-representation.

Being $G$ a compact connected simple Lie group such that the isotropy representation of $G/K$ decomposes $\mathfrak{m}$ as 
\begin{equation}\label{deco-iso}
\mathfrak{m}=\mathfrak{m}_1\oplus \ldots \oplus \mathfrak{m}_n
\end{equation}
where $\mathfrak{m}_1,\ldots,\mathfrak{m}_n$ are irreducible pairwise non-equivalent isotropy representations, all invariant metrics are given by
\begin{align}
\label{eq-compon-metr}
g_b&=x_1B_1+\ldots + x_nB_n
\end{align}
where $x_i>0$ and $B_i$ is the restriction of the (negative of the) Cartan-Killing form of $\mathfrak{g}$ to $\mathfrak{m}_i$. We also have 
\begin{equation}
\label{eq-compon-ricci}
\Ricci (g_b)=y_1 B_1 + \ldots + y_nB_n
\end{equation}
where 
$y_i$ is a function of $x_1, \ldots, x_n$.

Turning back to our example, which is a reductive homogeneous space, considering the previous discussion, an $\mathrm{Ad}(\mathrm{T}^2)$-invariant inner product $\ga$ is determined by three parameters $(x,y,z)$ characterized by 
\begin{align*}
     g\left(\h\mathrm{A}_{12},\h\mathrm{A}_{12}\right) &= g\left(\h\mathrm{S}_{12},\h\mathrm{S}_{12}\right) = x,\\
     g\left(\h\mathrm{A}_{13},\h\mathrm{A}_{13}\right) &= g\left(\h\mathrm{S}_{13},\h\mathrm{S}_{13}\right) = y,\\
     g\left(\h\mathrm{A}_{23},\h\mathrm{A}_{23}\right) &= g\left(\h\mathrm{S}_{23},\h\mathrm{S}_{23}\right) = z.
\end{align*}
We then redefine new basis to $\mathfrak{m} := T_0(\mathrm{SU}(3)/\mathrm{T}^2)$ by
$$X_1 = \frac{1}{2\sqrt{x}}\mathrm{A}_{12}, \ X_2 = \frac{1}{2\sqrt{x}}\mathrm{S}_{12}, \ X_3 = \frac{1}{2\sqrt{y}}\mathrm{A}_{13}, \ X_4 = \frac{1}{2\sqrt{y}}\mathrm{S}_{13}, \ X_5 = \frac{1}{2\sqrt{z}}\mathrm{A}_{23}, \ X_6 = \frac{1}{2\sqrt{z}}\mathrm{S}_{23} .$$

Since the following formula holds for the sectional curvature of $\ga$ (see \cite[Theorem 7.30, p. 183]{besse})
\begin{align*}
K(X,Y) = -\frac{3}{4}\|[X,Y]_{\mathfrak{m}}\|^2 - \frac{1}{2}g([X,[X,Y]_{\lie g}]_{\mathfrak{m}},Y) - \frac{1}{2}g([Y,[Y,X]_{\lie g}])_{\mathfrak{m}},X)\\
+\|U(X,Y)\|^2 - \ga(U(X,X),U(Y,Y)),
\end{align*}
\begin{equation*}
2\ga(U(X,Y),Z) = \ga([Z,X]_{\lie m},Y) + \ga(X,[Z,Y]_{\lie m})
\end{equation*}
we can set up the following table, where $C_{ij}^k$ denotes a structure constant, that is, $C_{ij}^k = \ga([X_i, X_j], X_k)$, and $K_{ij}$ the sectional curvature. Moreover, that for $(i,j)\neq (1,2), (3,4), (5,6)$ it holds that
$$
    K(X_i,X_j) = K_{ij} = -\h C_{ij}^kC_{ik}^j -\h C_{kj}^iC_{ij}^k - \frac{3}{4}(C_{ij}^k)^2 + \sum_{l=1}^6\frac{1}{4}\left(C_{li}^j+C_{lj}^i\right)^2-\sum_{l=1}^6C_{li}^iC_{lj}^j.
$$
\begin{center}
\begin{table}[h!]
     \begin{tabular}{ | l | l | c | c | c |}
     \hline
     $\mathrm{i}$ & $j$ & $k$ & $C_{ij}^k$ & $K_{ij}$ \\ \hline\hline
     $1$ & $2$ & $\mathrm{diag}(\mathrm{i},-\mathrm{i},0)$ & $1/x$ & $1/x$\\ \hline
     $1$ & $3$ & $5$ & $-\frac{\sqrt{z}}{2\sqrt{xy}}$ & $-\frac{3}{16}\frac{z}{xy} + \frac{1}{8x} + \frac{1}{8y} + \frac{1}{16}\frac{(x-y)^2}{xyz}$\\ \hline
     $1$ & $4$ & $6$ & $-\frac{\sqrt{z}}{2\sqrt{xy}}$ & $-\frac{3}{16}\frac{z}{xy} + \frac{1}{8x} + \frac{1}{8y} + \frac{1}{16}\frac{(x-y)^2}{xyz}$ \\ \hline
     $1$ & $5$ & $3$ & $\frac{\sqrt{y}}{2\sqrt{xz}}$ & $-\frac{3}{16}\frac{y}{xz} + \frac{1}{8x} + \frac{1}{8z} + \frac{1}{16}\frac{(z-x)^2}{xyz}$ \\ \hline
     $1$ & $6$ & $4$ & $\frac{\sqrt{y}}{2\sqrt{xz}}$ & $-\frac{3}{16}\frac{y}{xz} + \frac{1}{8x} + \frac{1}{8z}+\frac{1}{16}\frac{(z-x)^2}{xyz}$ \\ \hline
     $2$ & $3$ & $6$ & $\frac{\sqrt{z}}{2\sqrt{xy}}$ & $-\frac{3}{16}\frac{z}{xy} + \frac{1}{8x} + \frac{1}{8y} + \frac{1}{16}\frac{(y-x)^2}{xyz}$ \\ \hline
     $2$ & $4$ & $5$ & $-\frac{\sqrt{z}}{2\sqrt{xy}}$ & $-\frac{3}{16}\frac{z}{xy} + \frac{1}{8x} + \frac{1}{8y} + \frac{1}{16}\frac{(y-x)^2}{xyz}$ \\ \hline
     $2$ & $5$ & $4$ & $\frac{\sqrt{y}}{2\sqrt{xz}}$ &   $-\frac{3}{16}\frac{y}{xz} + \frac{1}{8x} + \frac{1}{8z} + \frac{1}{8}\frac{(z-x)^2}{xyz}$ \\ \hline
     $2$ & $6$ & $3$ & $-\frac{\sqrt{y}}{2\sqrt{xz}}$ & $-\frac{3}{16}\frac{y}{xz} + \frac{1}{8x} + \frac{1}{8z} + \frac{1}{16}\frac{(z-x)^2}{xyz}$\\ \hline
     $3$ & $4$ & $\mathrm{diag}(\mathrm{i},0,-\mathrm{i})$ & $1/y$ & $1/y$ \\ \hline
     $3$ & $5$ & $1$ & $-\frac{\sqrt{x}}{2\sqrt{yz}}$ & $-\frac{3}{16}\frac{x}{yz} + \frac{1}{8y} +  \frac{1}{8z}+\frac{1}{16}\frac{(y-z)^2}{xyz}$ \\ \hline
     $3$ & $6$ & $2$ & $ \frac{\sqrt{x}}{2\sqrt{yz}}$ & $-\frac{3}{16}\frac{x}{yz} + \frac{1}{8y} +  \frac{1}{8z}+\frac{1}{16}\frac{(y-z)^2}{xyz}$ \\ \hline
     $4$ & $5$ & $2$ & $-\frac{\sqrt{x}}{2\sqrt{yz}}$ &$-\frac{3}{16}\frac{x}{yz} + \frac{1}{8y} +  \frac{1}{8z}+\frac{1}{16}\frac{(y-z)^2}{xyz}$ \\ \hline
     $4$ & $6$ & $1$ & $-\frac{\sqrt{x}}{2\sqrt{yz}}$ & $-\frac{3}{16}\frac{x}{yz} + \frac{1}{8y} +  \frac{1}{8z}+\frac{1}{16}\frac{(y-z)^2}{xyz}$ \\ \hline
     $5$ & $6$ & $ \mathrm{diag}(0,\mathrm{i},-\mathrm{i})$ & $1/z$ & $1/z$ \\ \hline
     \end{tabular}
     \caption{Structure Constants and Sectional curvature of the basis' elements \label{table:1}}
     \end{table}
     \end{center}

 We can compute every notion of positive curvature from Table \ref{table:1}. Particularly, the Ricci curvature formula is given by 
$\Ricci(X) = \sum_{\mathrm{i}=1}^nK(X,X_i)$, straightforward computations from Table \ref{table:1} leads to
\begin{align}\label{eq:ricci01}
\Ricci(X_1) = \Ricci(X_2) =&\frac{1}{2x} + \frac{1}{12}\left(\frac{x}{y z}-\frac{z}{x y}-\frac{y}{x z}\right),\\
\Ricci(X_3) = \Ricci(X_4) =&\frac{1}{2y} + \frac{1}{12}\left(-\frac{x}{y z}-\frac{z}{x y}+\frac{y}{x z}\right) ,\\  
\Ricci(X_5) = \Ricci(X_6)=&\frac{1}{2z}+ \frac{1}{12}\left(-\frac{x}{y z}+\frac{z}{x y}-\frac{y}{x z}\right).\label{eq:ricci02}
\end{align}
Next, we discuss different notions of intermediate positive Ricci curvature, furnishing a common ground for subsequent analyses and (hence) to the proof of Theorems \ref{ithm:main}, \ref{ithm:main3}.

\subsection{Conditions to $\Ricd$: On the $d\mathrm{th}$-intermediate positive Ricci curvatures of left-invariant metrics on $\mathrm{SU}(3)/\mathrm{T}^2$ (interpotaling between positive sectional and positive Ricci curvature)}
\label{sec:def1}
We now take advantage of Table \ref{table:1} considering the symmetries appearing on the expressions for sectional curvature to get a simplified description of $d\mathrm{th}$-intermediate positive Ricci curvature (recall the Definition \ref{def:lm}). Take $d$-vectors out of the basis $\{X_1,\ldots,X_6\}\subset T_o\mathrm{SU}(3)/\mathrm{T}^2$ and pick any $1 \leq d\leq 5$. To describe properly the $\Ricci_d$-curvature on the direction of a given vector $X_i$ out of this basis, we must handle with some combinatorial quantities. To be more precise, observe that ensuring positivity of $\Ricci_d(X_i)$ in the sense of Definition \ref{def:lm}, is related to collecting every possible combination appearing as below
\begin{align*}
    \Ricci_d(X_i) = a_{12}\frac{1}{x} + b_{12}\left(-\frac{3}{16}\frac{z}{xy} + \frac{1}{8x} + \frac{1}{8y}+\frac{1}{16}\frac{(x-y)^2}{xyz}\right) + c_{12}\left(-\frac{3}{16}\frac{y}{xz} + \frac{1}{8x} + \frac{1}{8z} + \frac{1}{16}\frac{(z-x)^2}{xyz}\right),~i=1,2,\\
    \Ricci_d(X_i) = a_{34}\frac{1}{y} + b_{34}\left(-\frac{3}{16}\frac{z}{xy} + \frac{1}{8x} + \frac{1}{8y} + \frac{1}{16}\frac{(y-x)^2}{xyz}\right) + c_{34}\left(-\frac{3}{16}\frac{x}{yz} + \frac{1}{8y} +  \frac{1}{8z}+\frac{1}{16}\frac{(y-z)^2}{xyz}\right),~i = 3,4,\\
    \Ricci_d(X_i) = a_{56}\frac{1}{z} + b_{56}\left(-\frac{3}{16}\frac{y}{xz} + \frac{1}{8x} + \frac{1}{8z} + \frac{1}{16}\frac{(z-x)^2}{xyz}\right) + c_{56}\left(-\frac{3}{16}\frac{x}{yz} + \frac{1}{8y} +  \frac{1}{8z}+\frac{1}{16}\frac{(y-z)^2}{xyz}\right),~i = 5,6
\end{align*}
where $a_{jj+1} \in \{0,1\}$, $b_{jj+1}, c_{jj+1} \in \{0,1,2\}$ are such that $a_{jj+1} + b_{jj+1} + c_{jj+1} = d\leq 5$, with $(j,j+1)\in \{(1,2),(3,4),(5,6)\}$. That is, for instance, we have that $\Ricci_d(X_i)$ is positive for $i = 1, 2$ if, for every possible choice of $a_{12}\in \{0,1\},~b_{12}, c_{12} \in \{0,1,2\}$ we have $\Ricci_d(X_i) > 0$. 

We picture that in this setting, it suffices to obtain positive $\Ricci_d$ for every vector tangent to $\mathrm{SU}(3)/\mathrm{T}^2$ to look to the former expressions since $\Ricci_d\left(\sum_{i=1}^6x^iX_i\right) = \sum_{i=1}^6(x^i)^2\Ricci_d(X_i).$ %we have
%$$\begin{array}{lcl}\label{eq:fullriccid}
  %  \vspace{0.5em}\displaystyle\Ricci_d\left(\sum_{i=1}^6x^iX_i\right) &=&\displaystyle \sum_{i=1}^2(x^i)^2\left( a_{12}\frac{1}{x} + b_{12}\left(-\frac{3}{16}\frac{z}{xy} + \frac{1}{8x} + \frac{1}{8y}+\frac{1}{16}\frac{(x-y)^2}{xyz}\right) + c_{12}\left(-\frac{3}{16}\frac{y}{xz} + \frac{1}{8x} + \frac{1}{8z} + \frac{1}{16}\frac{(z-x)^2}{xyz}\right)\right) \\
  %  \vspace{0.5em}&+& \displaystyle\sum_{i=3}^4(x^i)^2\left( a_{34}\frac{1}{y} + b_{34}\left(-\frac{3}{16}\frac{z}{xy} + \frac{1}{8x} + \frac{1}{8y} + \frac{1}{16}\frac{(y-x)^2}{xyz}\right) + c_{34}\left(-\frac{3}{16}\frac{x}{yz} + \frac{1}{8y} +  \frac{1}{8z}+\frac{1}{16}\frac{(y-z)^2}{xyz}\right)\right)\\
  %  &+&\displaystyle\sum_{i=5}^6(x^i)^2\left(a_{56}\frac{1}{z} + b_{56}\left(-\frac{3}{16}\frac{y}{xz} + \frac{1}{8x} + \frac{1}{8z} + \frac{1}{16}\frac{(z-x)^2}{xyz}\right) + c_{56}\left(-\frac{3}{16}\frac{x}{yz} + \frac{1}{8y} +  \frac{1}{8z}+\frac{1}{16}\frac{(y-z)^2}{xyz}\right)\right).
%\end{array}$$
 Therefore, to ensure the existence of some $1\leq d \leq 5$ with positive $\Ricci_d$ curvature (in the sense of Definition \ref{def:lm}), it is necessary and sufficient to find such a $d$ constrained as: For every $(j,j+1)\in \{(1,2),(3,4),(5,6)\}$ and every $a_{jj+1} \in \{0,1\},~ b_{jj+1}, c_{jj+1} \in \{0,1,2\}$ with $a_{jj+1}+b_{jj+1}+c_{jj+1} = d$ it holds that $\Ricci_d(X_i) > 0$ for some $x, y, z$.

%As will be useful in Section \ref{sec:comparativo}, let us assume that $z = 1 - x - y$ and introduce the following functions
%\begin{align*}
 %   f(x,y) &:= -\frac{3(1-x-y)}{16xy} + \frac{1}{8x} + \frac{1}{8y}+\frac{1}{16}\frac{(x-y)^2}{xy(1-x-y)}\\
  %  g(x,y) &:= -\frac{3y}{16x(1-x-y)} + \frac{1}{8x} + \frac{1}{8(1-x-y)} + \frac{1}{16}\frac{(1-2x-y)^2}{xy(1-x-y)}\\
   % h(x,y) &:= -\frac{3x}{16y(1-x-y)} + \frac{1}{8y} +  \frac{1}{8(1-x-y)}+\frac{1}{16}\frac{(1-2y-x)^2}{xy(1-x-y)}
%\end{align*}
%We recover equation \eqref{eq:fullriccid} as
%\begin{multline}\label{eq:riccirewritten}
  %  \Ricci_d\left(\sum_{i=1}^6x^iX_i\right) = \sum_{i=1}^2(x^i)^2\left(\frac{a_{12}}{x} + %b_{12}f(x,y) + c_{12}g(x,y)\right) + \\\sum_{i=3}^4(x^i)^2\left(\frac{a_{34}}{y} + b_{34}f(x,y) %+ c_{34}h(x,y)\right) + \sum_{i=5}^6(x^i)^2\left(\frac{a_{56}}{1-x-y} + b_{56}g(x,y) + c_{56}%(x,y)\right).
%\end{multline}
Summarily, fixed $d\in \{1,\ldots,5\}$, an invariant Riemannian metric $\ga = (x,y,1-x-y)$ in $\mathrm{SU}(3)/\mathrm{T}^2$ lies in $\Ricd$ if, and only if, for every $(j,j+1)\in \{(1,2),(3,4),(5,6)\}$, the scalar functions defined below, denote generically by $R^{jj+1}_{a,b,c}(x,y)$, are positive simultaneously for every $(a_{jj+1},b_{jj+1},c_{jj+1})\in \cal O_d := \{(a,b,c)\in \{0,1\}\times \{0,1,2\}^2 : a + b + c = d\}$
\begin{align}
   \label{eq:explicit1} R^{12}_{a,b,c}(x,y) &:= a_{12}\frac{1}{x} + b_{12}\left(-\frac{3}{16}\frac{z}{xy} + \frac{1}{8x} + \frac{1}{8y}+\frac{1}{16}\frac{(x-y)^2}{xyz}\right) + c_{12}\left(-\frac{3}{16}\frac{y}{xz} + \frac{1}{8x} + \frac{1}{8z} + \frac{1}{16}\frac{(z-x)^2}{xyz}\right)\\
   \label{eq:explicit2} R^{34}_{a,b,c}(x,y) &:= a_{34}\frac{1}{y} + b_{34}\left(-\frac{3}{16}\frac{z}{xy} + \frac{1}{8x} + \frac{1}{8y} + \frac{1}{16}\frac{(y-x)^2}{xyz}\right) + c_{34}\left(-\frac{3}{16}\frac{x}{yz} + \frac{1}{8y} +  \frac{1}{8z}+\frac{1}{16}\frac{(y-z)^2}{xyz}\right)\\
   \label{eq:explicit3} R^{56}_{a,b,c}(x,y) &:= a_{56}\frac{1}{z} + b_{56}\left(-\frac{3}{16}\frac{y}{xz} + \frac{1}{8x} + \frac{1}{8z} + \frac{1}{16}\frac{(z-x)^2}{xyz}\right) + c_{56}\left(-\frac{3}{16}\frac{x}{yz} + \frac{1}{8y} +  \frac{1}{8z}+\frac{1}{16}\frac{(y-z)^2}{xyz}\right)
\end{align}

\subsection{Conditions to $\ricd$: On the intermediate positive Ricci curvatures of left-invariant metrics on $\mathrm{SU}(3)/\mathrm{T}^2$ (interpotaling between positive Ricci and positive scalar curvature)}
\label{sec:def2}

Let us denote $\Ricci_{\ga}(X_1) = \Ricci_{\ga}(X_2) := r_x,~\Ricci_{\ga}(X_3) = \Ricci_{\ga}(X_4) := r_y$ and $\Ricci_{\ga}(X_5) = \Ricci_{\ga}(X_6) := r_z$. One recovers
\begin{eqnarray*}
r_x&=&\frac{1}{2 x} +\frac{1}{12} \left(\frac{x}{y z}-\frac{z}{x y}-\frac{y}{x z}\right),\\ \\
r_y&=&\frac{1}{2 y} + \frac{1}{12} \left(-\frac{x}{y z}-\frac{z}{x y}+\frac{y}{x z}\right), \\ \\ 
r_z&=&\frac{1}{2 z}+ \frac{1}{12} \left(-\frac{x}{y z}+\frac{z}{x y}-\frac{y}{x z}\right)
\end{eqnarray*}

Let $a, b, c \in \bb N$ non-negative integers. Following Definition \ref{def:ours}, in Section \ref{sec:comparativo}, we shall deal with positive intermediate Ricci curvature ranging from positive Ricci to positive scalar curvature. Aiming such a goal, let $d\in \{1,\ldots,6\}$ and consider the set  $\cal N_d:= \{(a, b, c) \in \{0,1,2\}^3: a+b+c = d\}$. Define the scalar function $F_{a,b,c}(x,y,z):= ar_x + br_y + cr_z$.  If for every $(a,b,c) \in \cal N_d$ one has $F_{a,b,c}(x,y,z) > 0$, we say that $\ga$ has \emph{$d$-positive intermediate curvature} and $(x,y,z) \in \cal \ricd$.

\section{The projected homogeneous Ricci flow: Quick overview and the equations in $\mathrm{SU}(3)/\mathrm{T}^2$}
We recall that a family of Riemannian metrics $g(t)$ in $M$ is called a Ricci flow if it satisfies
\begin{equation}
\label{ricci-flow}
\frac{\partial \textsl{g}}{ \partial t}=-2\Ricci(\textsl{g}).
\end{equation}
%It can be checked that for any $\lambda >0$ we have $\Ricci(\lambda \ga) = \Ricci(\ga)$. Consequently, the \emph{Ricci operator} $r(\ga)$, given by
%\begin{equation}
%\label{eq-ricciop}
%\Ricci(\ga)(X, Y) = \ga(r(\ga)X, Y)
%\end{equation}
%is homogeneous of degree $-1$: $r(\lambda \ga) = \lambda^{-1} r(\ga)$, and so is the scalar curvature $S(g) = \rm{tr}(r(\ga))$. Moreover,
For any compact connected and $n$-dimensional manifold $M$ one can consider (see \cite{bwz}):
\begin{equation}
\frac{d\ga_b}{dt}=-2\left(\Ricci(\ga_b) - \frac{\mathrm{S}(\ga_b) }{n}{\ga}_b \right)
\end{equation}
which preserves the metrics with unit volume and is the gradient flow of $\ga_b\mapsto \mathrm{S}(\ga_b)$ when restricted to such space.  
In particular, the normalized Ricci flow
\begin{equation}
\label{normaliz-ricci-flow}
\frac{\partial \ga}{ \partial t}=-2\left( \Ricci(\ga) - \frac{T(\ga)}{n}\ga \right)
\end{equation}
that preserves metrics of unit volume $V(\ga) = \int_M dV_{\ga}$ necessarily decreases scalar curvature; where $dV_{\ga}$ is the Riemannian volume form and $T(\ga) = \int_M\mathrm{S}(\ga) dV_{\ga}$ is the total scalar curvature functional.

For any compact homogeneous space $M=G/K$ with connected isotropy subgroup $K$, a $G$-invariant metric $\textsl g$ on $M$ is determined by its value $\textsl{g}_b$ at the origin $b=K$, which is a $\Ad_G(K)$-invariant inner product. Just like $\ga$, the  Ricci tensor $\Ricci (\ga)$ and the scalar curvature $\mathrm{S}(\ga)$ are also $G$-invariant and completely determined by their values at $b$, $\Ricci(\ga)_b = \Ricci(\ga_b)$, $\mathrm{S}(\ga)_b=\mathrm{S}(\ga_b)$. Taking this into account, the Ricci flow equation (\ref{ricci-flow}) becomes the autonomous ordinary differential equation known as the (non-normalized) \textit{homogeneous Ricci flow}:
\begin{equation}
\label{inv-ricci-flow}
\frac{dg_b}{dt}=-2\Ricci(g_b).
\end{equation}
The equilibria of (\ref{normaliz-ricci-flow}) are precisely the metrics satisfying $\Ricci(\ga) = \lambda \ga$, $\lambda \in \mathbb{R}$, the so called \emph{Einstein metrics}.  On the other hand, the unit volume Einstein metrics are precisely the critical points of the functional $\mathrm{S}(\ga)$  on the space of unit volume metrics (see \cite{wz}). Recalling equation \eqref{eq-compon-metr} and \eqref{eq-compon-ricci} one derives that the Ricci flow (\ref{inv-ricci-flow}) becomes the autonomous system of ordinary differential equations
\begin{equation}
\label{eq-ricci-flow-coords}
\frac{dx_k}{dt}= -2 y_k, \qquad \ k=1,\ldots , n.
\end{equation}

It is always very convenient to rewrite the Ricci flow equation in terms of the {Ricci operator} $r(g)_b$, which is possible since $r(g)_b$ is invariant under the isotropy representation and hence
$r(g)_b|_{\mathfrak{m}_k}$ is a multiple $r_k$ of the identity. One obtains
$$
y_k = x_k r_k
$$
and equation \eqref{eq-ricci-flow-coords} becomes
\begin{equation}
\label{eq-ricci-flow-final}
\frac{dx_k}{dt}= -2 x_kr_k.
\end{equation}

Recalling that he isotropy representation of $\mathrm{SU}(3)/\mathrm{T}^2$ decomposes into three irreducible and non-equivalent components:
$$
\mathfrak{m}=\mathfrak{m}_1 \oplus \mathfrak{m}_2 \oplus \mathfrak{m}_3.$$

 The Ricci tensor of an invariant metric $\ga = (x,y,z)$ is also invariant, and its components are given by (recall equations \eqref{eq:ricci01}-\eqref{eq:ricci02}):

\begin{eqnarray*}
r_x&=&\frac{1}{2 x} +\frac{1}{12} \left(\frac{x}{y z}-\frac{z}{x y}-\frac{y}{x z}\right),\\ \\
r_y&=&\frac{1}{2 y} + \frac{1}{12} \left(-\frac{x}{y z}-\frac{z}{x y}+\frac{y}{x z}\right), \\ \\ 
r_z&=&\frac{1}{2 z}+ \frac{1}{12} \left(-\frac{x}{y z}+\frac{z}{x y}-\frac{y}{x z}\right)
\end{eqnarray*}
and the corresponding (unnormalized) Ricci flow equation is given by 
\begin{equation}\label{RF-eq}
 x^\prime = -2x r_x, \quad y^\prime = -2y r_y, \quad z^\prime = -2z r_z.   
\end{equation}

The projected Ricci flow is obtained by a suitable reparametrization of the time, obtaining an induced system of ODEs with phase-portrait on the set
$$
\{(x,y,z)\in \mathbb{R}^3: x+y+z=1  \}\cap {\bb R}^3_+,
$$
where $\mathbb{R}^3_+=\{ (x,y,z)\in \mathbb{R}^3: x>0,y>0,z>0  \}$, following of the projection on the $xy-$plane. The resulting system of ODEs is dynamically equivalent to the system \eqref{RF-eq} (Corollary 4.3 in \cite{proj-ricci-flow}).

Applying the analysis developed in \cite[Section 5]{proj-ricci-flow}, we arrive at the equations of the projected Ricci flow equation (see equation (31) in Section 5 of \cite{proj-ricci-flow}):
\begin{equation}\label{proj-ricci-flow}
    \left\{\begin{array}{l}
    x^{\,\prime} = {u(x,y)},\\
    y^{\,\prime} = {v(x,y)},
\end{array}\right.
\quad (x,y)\in T=\{(x,y)\in\R^2:\, x\geq 0,\,y\geq 0,\,x+y\leq 1\},
\end{equation}
where $$u(x,y)=2 x \left(x^2 (2-12 y)-3 x \left(4 y^2-6 y+1\right)+6 y^2-6 y+1\right),$$ and $$v(x,y)=-2 y (2 y-1) \left(6 x^2+6 x (y-1)-y+1\right).$$

\section{The global dynamics of invariant metrics on $\mathrm{SU}(3)/\mathrm{T}^2$ under the Ricci flow}
\label{sec:comparativo}

Here we accomplish the proof of Theorems \ref{ithm:main}--\ref{ithm:main3}. With this goal, let us begin recalling from Section \ref{sec:def1} that, fixed $d\in \{1,\ldots,5\}$, then $\ga \in \Ricd$ 
if, and only if, the scalar functions $R^{jj+1}_{(a,b,c)}(x,y)$ obtained from $\ga$ are simultaneously positive for every $(a,b,c)\in \cal O_d = \{(a,b,c)\in \{0,1\}\times \{0,1,2\}^2 : a + b + c = d\}$ and every $(j,j+1)\in \{(1,2),(3,4),(5,6)\}$.

In contrast to it, understanding the existence of positively curved metrics for the curvature notions interpolating between Ricci and scalar curvature, that is, fixed $d\in \{1,\ldots,6\}$, finding $\ga \in \ricd$, is reduced to, following Section \ref{sec:def2}, obtain $(x,y,z)$ for which
$F_{a,b,c}(x,y,z) > 0$ for every $a,b,c \in \cal N_d$, where
$\cal N_d:= \{(a, b, c) \in \{0,1,2\}^3: a+b+c = d\}$,~ $F_{a,b,c}(x,y,z) := ar_x + br_y + cr_z$ and
\begin{eqnarray*}
r_x&=&\frac{1}{2 x} +\frac{1}{12} \left(\frac{x}{y z}-\frac{z}{x y}-\frac{y}{x z}\right),\\ \\
r_y&=&\frac{1}{2 y} + \frac{1}{12} \left(-\frac{x}{y z}-\frac{z}{x y}+\frac{y}{x z}\right), \\ \\ 
r_z&=&\frac{1}{2 z}+ \frac{1}{12} \left(-\frac{x}{y z}+\frac{z}{x y}-\frac{y}{x z}\right)
\end{eqnarray*}

In what follows, we describe the global dynamics of the vector field $F(x,y)=\big(u(x,y),v(x,y)\big)$ associated with differential system \eqref{proj-ricci-flow}. 
The triangular domain  $\mathcal{T}$  can  be divided into four invariant triangles (see Figure \ref{phaseportrait}), namely:
\[
\begin{aligned}
 \mathcal{T}_1=&\{(x,y)\in T:\, x+y\leq 1/2\},\\
\mathcal{T}_2=&\{(x,y)\in T:\, x\geq 1/2\},\\
 \mathcal{T}_3=&\{(x,y)\in T:\, y\geq 1/2\},
\end{aligned}
\]
and the central one 
\[
\mathcal{T}_c=\{(x,y)\in T:\,x\leq1/2,\, y\leq 1/2,\,x+y\geq 1/2\}.
\]
Denote the vertices of $\mathcal{T}$  by  $O=(0,0)$, $P=(1,0)$, and $Q=(0,1).$ Also, denote the vertices of $\mathcal{T}_c$ by $L=(1/2,0),$ $M=(1/2,1/2),$ and $N=(0,1/2).$

The vertices $O, P, Q$ correspond to unstable star node equilibria. Indeed, for each $X\in\{O, P, Q\},$ the Jacobian matrix $dF(X)$ coincides with the identity matrix multiplied by $2$. The vertices $L, M, N$ correspond to stable star node equilibria. Indeed, for each $X\in\{L, M, N\},$ the Jacobian matrix $dF(X)$ coincides with the identity matrix multiplied by $-1$. Therefore, for each $X\in\{L, M, N, O, P, Q\},$ there exists a neighborhood $V\subset\R^2$ of $X$ such that vector field $F|_V$ is $C^1$ conjugated to the linear vector field $dF(X)\cdot (x,y).$ In particular, the closure of the orbits approaching to each one of these equilibria are transversal to each other at the equilibria. In addition, it is straightforward to see that heteroclinic orbits connect the unstable node $O$ to the stable nodes $L$ and $N$, the unstable node $P$ to the stable nodes $L$ and $M$, and the unstable node $Q$ to the stable nodes $M$ and $N$ (see Figure \ref{phaseportrait}).

Besides the vertices $L, M, N, O, P, Q$, the vector field $F$ has four other equilibria. Three of them belonging to the sides of  $\mathcal{T}_c$ and the last one in the interior of $\mathcal{T}_c,$ namely $R=(1/4,1/4)$, $S=(1/2,1/4)$,  $T=(1/4,1/2),$ and $U=(1/3,1/3).$ We remark that the points $R,S,T$ represents the K\"ahler-Einstein metrics; and the point $U$ represents the normal-Einstein metric. 

The point $U$ corresponds to an unstable star node equilibrium. Indeed, the Jacobian matrix $dF(X)$ coincides with the identity matrix multiplied by $2/9$. Thus, the same comment above about the local $C^1$ conjugacy holds for $U$.

The points $R, S,$ and $T$ correspond to saddle equilibria. Indeed, for each $X\in\{R,S,T\},$ the Jacobian matrix $dF(X)$ has the eigenvalues $1/2$ and $-1/4$. In addition: the segments $\ov{OU}$, $\ov{PU}$, and $\ov{QU}$ correspond to the stable manifolds of $R,$ $S,$ and $T$, respectively; and the segments $\ov{NL}$, $\ov{LM}$, and $\ov{MN}$ correspond to the unstable manifolds of $R,$ $S,$ and $T$, respectively. In particular, the segments 
$\ov{RO},$ $\ov{RU},$ $\ov{SP},$ $\ov{SU},$ $\ov{TQ},$ and $\ov{TU}$ correspond to heteroclinic orbits connecting the unstable nodes with the saddles through the stable manifold; and the segments $\ov{RL},$ $\ov{RN},$ $\ov{SM},$ $\ov{SL}$, $\ov{TM},$ and $\ov{TN}$ correspond to heteroclinic orbits connecting the stable nodes with the saddles through the unstable manifold (see Figure \ref{phaseportrait}).

Let $\alpha(X)$ and $\omega(X)$ denote, respectively, the $\alpha$ and $\omega$ limit sets of $X\in\mathcal{T}.$ Using Poincar\'{e}--Bendixson Theorem arguments, one can easily see that $\alpha(X)=\{O\}$ for any $X\in\mathcal{T}_1^{\circ}$; $\alpha(X)=\{P\}$ for any $X\in\mathcal{T}_2^{\circ}$; $\alpha(X)=\{Q\}$ for any $X\in\mathcal{T}_3^{\circ}$; and $\alpha(X)=\{U\}$ for any $X\in\mathcal{T}_c^{\circ}$.  Also, one can easily see that $\omega(X)=\{N\}$ for any $X$ in the interior of the triangle $OUQ$; $\omega(X)=\{L\}$ for any $X$ in the interior of the triangle $OPU$; and $\omega(X)=\{M\}$ for any $X$ in the interior of the triangle $PQU$.

With the considerations above, we have completely described the asymptotic behavior of the trajectories with initial conditions lying on $\mathcal{T}$ (see Figure \ref{phaseportrait}).
\begin{figure}[H]
\centering
\begin{overpic}[height=6cm]{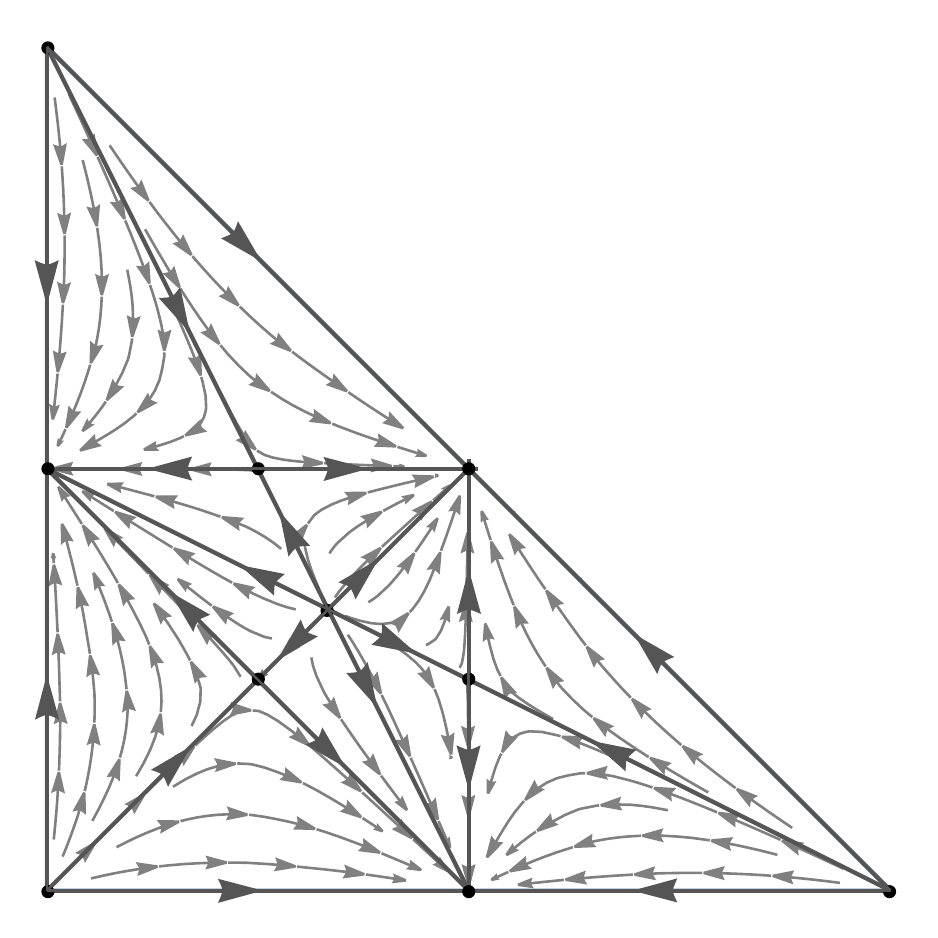}
%\begin{overpic}[height=6cm,grid,tics=5,]{phaseportrait.pdf}
\put(0,0){$O$}
\put(95,0){$P$}
\put(3,97){$Q$}
\put(47,0){$L$}
\put(51,52){$M$}
\put(-1,50){$N$}
\put(25,20){$R$}
\put(51,28){$S$}
\put(27,52){$T$}
\put(34,29){$U$}
\end{overpic}
\caption{Phase portrait of the vector field $F$. Black circles correspond to the equilibria, whereas the continuous segments connecting them correspond to the heteroclinic orbits. The $\alpha$ and $\omega$ limit set of $F$ for any initial condition are completely described above.}\label{phaseportrait}
\end{figure}

\subsection{Proof of Theorem \ref{ithm:main}}

Let $d\in \{1,2,3,4,5\}$ be fixed. Consider the family of scalar functions \[R^{12}_{a,b,c}(x,y),~R^{34}_{a,b,c}(x,y),~R^{56}_{a,b,c}\]
explicitly defined in equations \eqref{eq:explicit1}-\eqref{eq:explicit3}.

Accordingly, by drawing the curves $R^{jj+1}_{a,b,c}(x,y) = 0$ over the triangle $\mathcal{T}$ (see Figure \ref{fig:sec-ric}), we obtain the region $\Ricd$ as
\[
\Ricd=\bigcup_{(j,j+1)\in \{(1,2),(3,4),(5,6)\}}\{(x,y)\in\mathcal{T}:\, R^{jj+1}_{a,b,c}(x,y)>0\,\,\forall\,\,(a,b,c)\in \mathcal{O}_d\}.
\]
This region corresponds to the interior of the union of the colored and checkered regions in Figure \ref{fig:sec-ric}.

For each $d\in\{1,2,3\},$ $\Ricd$ is contained in the interior of the central triangle, $\mathcal T_c^{\circ}$. In addition, the boundaries of $\Ricd$ are tangent to each other at the points $L, M,$ and $N.$ Moreover, at these points, such boundaries are tangent to the closure of the heteroclinic orbits $\ov{UL}$, $\ov{UM},$ and $\ov{UN}$ of 
 $F$. Since, for each $\ga=(x_0,y_0)\in \Ricd\setminus(\ov{UL}\cup\ov{UM}\cup\ov{UN})$, $\omega(x_0,y_0)\in\{L,M,N\},$ then there must exists exists $t^*\in  \R$ such that $\ga(t^*) \not\in \Ricd$. Otherwise, the closure of the orbit of $F$ through $(x_0,y_0)$ would be tangent to either $\ov{UL}$, $\ov{UM},$ or $\ov{UN}$ at $L,$ $M,$ or $N$, which is  impossible because $L,$ $M,$ and $N$ are star nodes equilibria and, as mentioned previously,  the closure of the orbits approaching to each one of these equilibria are transversal to each other at the equilibria. It is worth mentioning that for $g=(x_0,y_0)\in (\ov{UL}\cup\ov{UM}\cup\ov{UN})\setminus\{L,M,N\}$, $g(t)\in \Ricd$ for every $t\in\R.$

Now, for $d=4,$ $\Ricd$ coincides with the interior of the central triangle,  $\mathcal T_c^{\circ}$ (see Figure \ref{fig:sec-ric}), which is invariant by the flow of $F.$ Therefore, for each $g=(x_0,y_0)\in \Ricd$, one has that $g(t)\in \Ricd$ for every $t\in\R.$ 

Finally, for $d=5,$ $\Ricd$ contains the interior of the central triangle,  $\mathcal T_c^{\circ}$  (see Figure \ref{fig:sec-ric}). Moreover, $\Ricd\setminus \mathcal{T}_c$ is nonempty. Thus,  by taking $(x_0,y_0)\in \Ricd\setminus \mathcal{T}_c,$ since $\alpha(x_0,y_0)\in \{O,P,Q\},$ we conclude that there exists $t^*\in  \R$ such that $\ga(t) \not\in \ricd$ for every $t$ in a neighborhood of $t^*$.

\begin{figure}[h!]

\begin{center}
   
\begin{tabular}{ccc}
    \includegraphics[height=52mm]{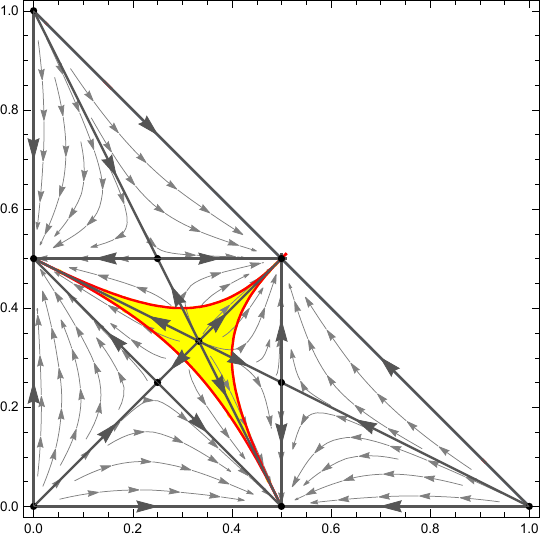}\label{fig:sec} &\includegraphics[height=52mm]{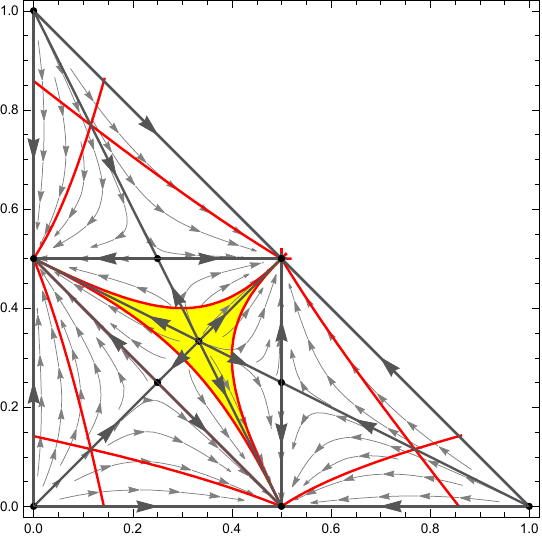}&\includegraphics[height=52mm]{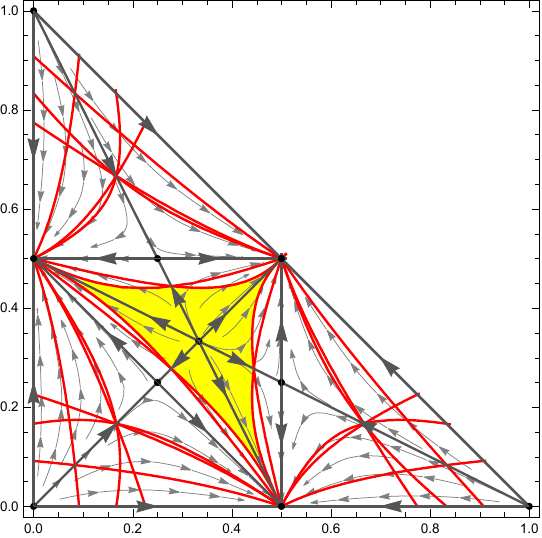}\\
    $d=1$& $d=2$&$d=3$\\
    \\
     \includegraphics[height=52mm]{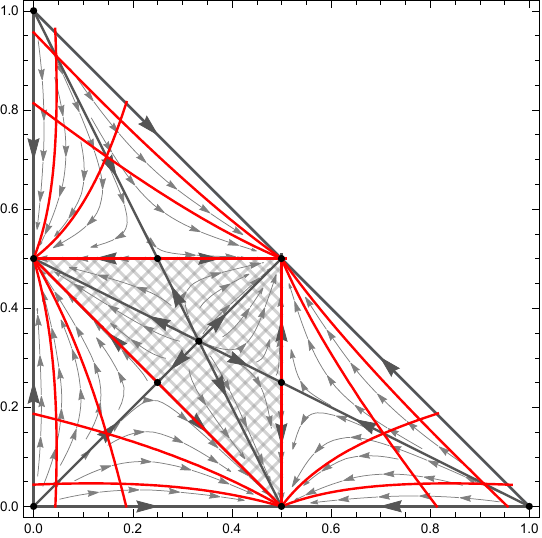} &\includegraphics[height=52mm]{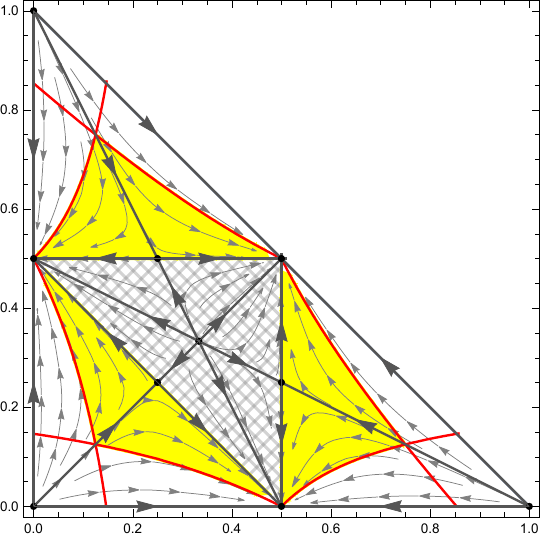}\\
     $d=4$&$d=5$
\end{tabular}

 \caption{Phase portrait of the Projected Ricci flow is depicted jointly with the regions $\Ricd$ (see Definition \ref{defi:sec-ric}). The yellow regions represent the metrics $g\in \Ricd$ that lose this property under the Ricci flow. Meanwhile, the checkered region ($d=4,5$) represents the metrics $g\in \Ricd$ for which the projected Ricci flow $g(t)$ with $g(0)=g$ satisfies $g(t)\in \Ricd$ for all $t\in \mathbb{R}$.}
    \label{fig:sec-ric}

\end{center}
\end{figure}

\subsection{Proof of Theorem \ref{ithm:main3}}

Let $d\in\{1,2,\ldots,6\}$ be fixed. For each $(a,b,c)\in\mathcal{N}_d$, we define the function $h_{a,b,c}(x,y):=F_{a,b,c}(x,y,1-x-y)$. Accordingly, by drawing the curves $h_{a,b,c}=0$ over the triangle $\mathcal{T},$ we obtain the region $\ricd$ as
\[
\ricd=\{(x,y)\in\mathcal{T}:\, h_{a,b,c}(x,y)>0\,\,\forall\,\,(a,b,c)\in \mathcal{N}_d\}.
\]
This region corresponds to the interior of the union of the colored and checkered regions in Figure \ref{fig:ric-scal}. One can see that the interior of the central triangle, $\mathcal{T}_c^{\circ}$, is contained in $\ricd$. Since $\mathcal{T}_c^{\circ}$  invariant by the flow of $F$, then statement (b) follows by taking $\mathcal{R}=\mathcal{T}^{\circ}$. Finally, by taking $(x_0,y_0)\in \ricd\setminus \mathcal{T},$ since $\alpha(x_0,y_0)\in \{O,P,Q\},$ we conclude that there exists $t^*\in  \R$ such that $\ga(t) \not\in \ricd$ for every $t$ in a neighborhood of $t^*$.

\begin{figure}[h!]
   
\begin{center} 
\begin{tabular}{ccc}
    \includegraphics[height=52mm]{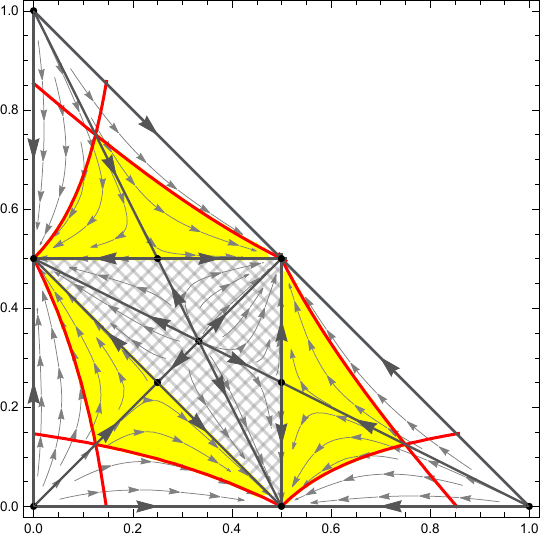} &\includegraphics[height=52mm]{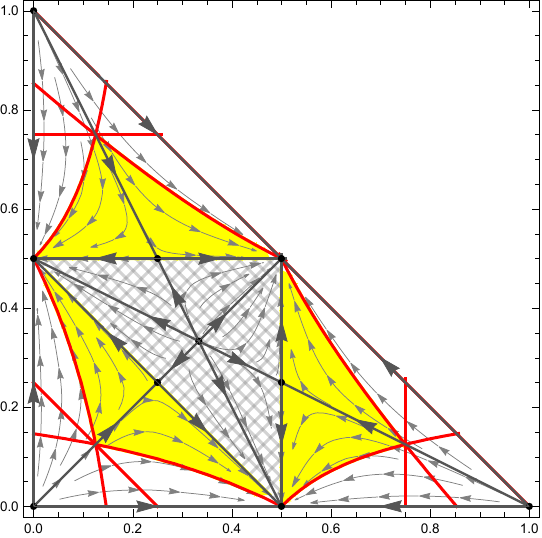}&\includegraphics[height=52mm]{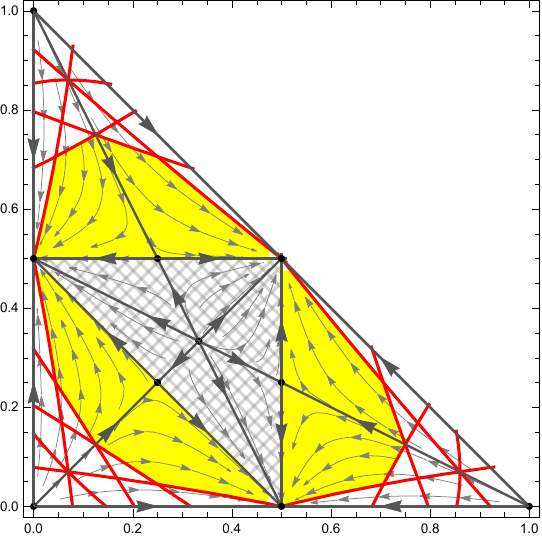}\\
    $d=1$& $d=2$&$d=3$\\
    \\
     \includegraphics[height=52mm]{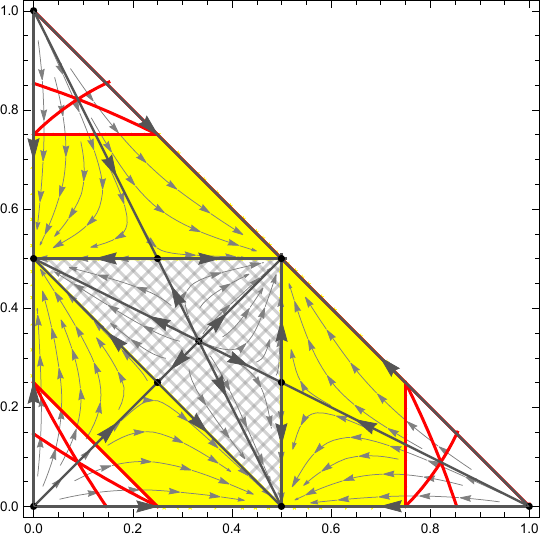} &\includegraphics[height=52mm]{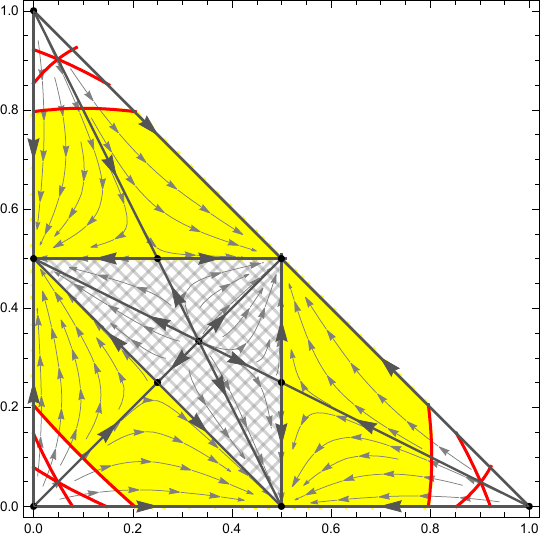}&\includegraphics[height=52mm]{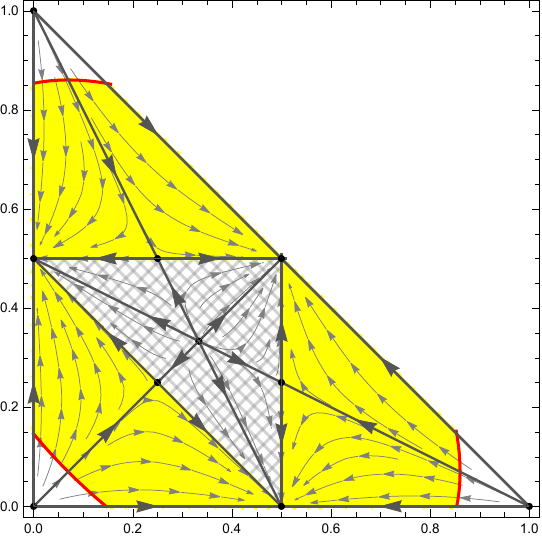}\\
     $d=4$&$d=5$&$d=6$
\end{tabular}

  \caption{Phase portrait of the Projected Ricci flow is depicted jointly with the regions $\ricd$ (see Definition \ref{defi:ric-scal}). The yellow regions represent the metrics $g\in \Ricd$ that lose this property under the Ricci flow. Meanwhile, the checkered inner triangle represents the metrics $g\in \ricd$ for which the projected Ricci flow $g(t)$ with $g(0)=g$ satisfies $g(t)\in \ricd$ for all $t\in \mathbb{R}$.}
    \label{fig:ric-scal}

\end{center}
\end{figure}

\section{Associated bundles and intermediate positive curvatures}

	This section follows the procedure systematically described in \cite{Cavenaghi2022}, which consists, \emph{in nature}, of a metric deformation on fiber bundles with a compact structure group. 
	
	Let $F\hookrightarrow M\stackrel{\pi}{\to}B$ be a fiber bundle from a compact manifold $M$, with compact fiber $F$ and compact structure group $G$. We denote the base manifold by $B$. Given a lie group $G$, if it acts effectively on $F$, then $G$ is a structure group for $\pi$ if some choice of local trivializations takes values on $G$. Any fiber bundle recovers a principal $G$-bundle over $B$ (see \cite[Proposition 5.2]{KN} for details), which we denote by $P$. It is straightforward checking that the following is a $G$-bundle, $\overline\pi : P\times F \to M$, whose principal action is given by
	\begin{equation}\label{eq:action}
	r (p,f):= (rp, rf).
	\end{equation} 
	(See, for instance, the construction on the proof of \cite[Proposition 2.7.1]{gw}.)
 
	For each pair $\textsl g$ and $\textsl g_F$ of $G$-invariant metrics on $P$ and $F$, respectively, there exists a metric $\textsl h$ on $M$ induced by $\overline\pi$. Fixing a point $(p,f) \in P\times F$, any vector $\overline{X}\in T_{(p,f)}(P\times F)$ can be written as $\overline{X} = (X+V^{\vee},X_F+W^*)$, where $X$ is orthogonal to the $G$-orbit on $P$, $X_F$ is orthogonal to the $G$-orbit on $F$ and, for $V,W\in\lie g$, $V^{\vee}$ and $W^{\ast}$ are the action vectors relative to the $G$-actions on $P$ and $F$ respectively, see for instance \cite[Section 2]{Cavenaghi2022} for further clarifications. 
 
 Chosen a bi-invariant metric $Q$ on $G$ and Riemannian metrics $\textsl g,~\textsl g_F$, it comes from the fact that two any Riemannian metrics are pointwise related by a symmetric tensor the existence of \emph{almost everywhere pointwise} positive-definite symmetric tensors $O,~O_F$, named  \emph{the orbit tensors associated to} $\textsl g,~\textsl g_F$, that codify the geometry of the orbits of $G$ on $P$ and $F$, comparing $\ga$ with $Q$ and $\ga_F$ with $Q$, to know
$$
     \ga_F(U^*,V^*) = Q(O_FU,V), \
     \ga(U^{\vee},V^{\vee}) = Q(OU,V).
$$
 It is worth pointing recalling that the \emph{almost everywhere positive definiteness} is justified as: If $G$-acts effectively on a smooth manifold $F$, it exists an open and dense subset $F^{reg}\subset F$ such that every two points have conjugate isotropy subgroups. Furthermore, for any $G$-invariant metric $\ga_F$ on $F$, the induced metric in $F^{reg}/G$ is named \emph{the orbit distance metric}. For points $f\in F\setminus F^{reg}$, the orbit tensor $O_F$ may lose rank. This is explored in Lemma \ref{lem:blowingup}.

	It comes from \cite[Section 3.1]{Cavenaghi2022} that any $\overline X$ is $\textsl g + \textsl g_F$-orthogonal to the $G$-orbit of \eqref{eq:action} if and only if
	\begin{equation}\label{eq:horbarpi}
	\overline{X} = (X-(O^{-1}O_FW)^{\vee},X_F + W^{\ast}).
	\end{equation}
	for some $W\in\lie g$. Fixing $(p,f)$, we may abuse the notation and denote
	\begin{equation}\label{eq:conventionU*}
	d\bar\pi_{(p,f)}(X,X_F+U^*) := X+X_F+ U^*.
	\end{equation}
It can be proved \cite[Theorem 3.1]{Cavenaghi2022}:
  \begin{theorem}[Sectional curvature under a fiber bundle Cheeger deformation]\label{thm:secnew}
		Let $\textsl h_t$ be a Riemannian submersion metric in $M$ obtained from the product metric $\textsl g_t + \textsl g_F$, where $\ga_t$ is a \emph{Cheeger deformation}\footnote{See for instance \cite{cheeger,mutterz,Cavenaghi2023}.} of $\ga$.  Then for every pair $\tilde X = X + X_F + U^*$, $\tilde Y = Y + Y_F + V^*$ , 
		\begin{equation*}
		\tilde{\kappa}_{\textsl h_t}(\tilde X, \tilde Y) =  \kappa_{\ga_t}(X{+}U^{\vee},Y{+}V^{\vee}) {+} K_{\textsl g_F}(X_F - (O_F^{-1}OU)^*, Y_F - (O_F^{-1}OV)^*) + \tilde z_t(\tilde X,\tilde Y),
		\end{equation*}
		where $\kappa_{\ga_t}$ is the unreduced sectional curvature of $\ga_t$ computed on some appropriate reparameterization of a two-plane $X{+}U^{\vee}\wedge Y{+}V^{\vee}$ of $TP$ and $\tilde z_t$ is non-decreasing in $t$.
	\end{theorem}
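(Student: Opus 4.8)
The plan is to combine the Gray--O'Neill formula for Riemannian submersions with the product structure of $\ga_t+\ga_F$ and the Cheeger deformation identities for $\ga_t$, following \cite[Section~3]{Cavenaghi2022}. By construction $h_t$ is the quotient metric of the Riemannian submersion $\overline\pi\co(P\times F,\ga_t+\ga_F)\to M$, and by \eqref{eq:horbarpi} the $(\ga_t+\ga_F)$-horizontal lift of a tangent vector $\tilde X=X+X_F+U^{*}$, written as in \eqref{eq:conventionU*}, is $\tilde X^{\mathsf h}=\big(X-(O^{-1}O_FW_X)^{\vee},\,X_F+W_X^{*}\big)$ for a unique $W_X\in\lie g$ determined by $U$ through the orbit tensors $O,O_F$. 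So the first step is to write
\[
\tilde\kappa_{h_t}(\tilde X,\tilde Y)=\kappa_{\ga_t+\ga_F}\big(\tilde X^{\mathsf h},\tilde Y^{\mathsf h}\big)+\tfrac34\big\|[\tilde X^{\mathsf h},\tilde Y^{\mathsf h}]^{v}\big\|^{2}_{\ga_t+\ga_F},
\]
where $(\cdot)^{v}$ denotes the component tangent to the diagonal $G$-orbit \eqref{eq:action} and $\kappa$ is the unreduced sectional curvature.

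Next I would exploit that $\ga_t+\ga_F$ is a product metric, so its unreduced curvature splits with no mixed term:
\[
\kappa_{\ga_t+\ga_F}\big(\tilde X^{\mathsf h},\tilde Y^{\mathsf h}\big)=\kappa_{\ga_t}\big(X-(O^{-1}O_FW_X)^{\vee},\,Y-(O^{-1}O_FW_Y)^{\vee}\big)+\kappa_{\ga_F}\big(X_F+W_X^{*},\,Y_F+W_Y^{*}\big).
\]
Using the defining relations $\ga(U^{\vee},V^{\vee})=Q(OU,V)$ and $\ga_F(U^{*},V^{*})=Q(O_FU,V)$ to rewrite $W_X^{*}=-(O_F^{-1}OU)^{*}$ and $W_Y^{*}=-(O_F^{-1}OV)^{*}$, the second summand becomes exactly the middle term $K_{\ga_F}(X_F-(O_F^{-1}OU)^{*},Y_F-(O_F^{-1}OV)^{*})$ of the statement; and a reparameterization of the two-plane identifying $X-(O^{-1}O_FW_X)^{\vee}$ with $X+U^{\vee}$ (and the analogue for $Y$) turns the first summand into $\kappa_{\ga_t}(X+U^{\vee},Y+V^{\vee})$. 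Everything that is left over --- the O'Neill $A$-tensor term $\tfrac34\|[\tilde X^{\mathsf h},\tilde Y^{\mathsf h}]^{v}\|^2_{\ga_t+\ga_F}$ (which is $\ge 0$) together with the correction terms generated by this reparameterization --- is then collected into $\tilde z_t(\tilde X,\tilde Y)$.

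The last step, and the hard part, is to show $\tilde z_t$ is non-decreasing in $t$. The idea is to unfold the Cheeger picture $\ga_t=(P\times G,\ga+\tfrac1tQ)/G$, so that the whole construction becomes an iterated Riemannian submersion from $(P\times G\times F,\ga+\tfrac1tQ+\ga_F)$ and all $t$-dependence sits on the $\tfrac1tQ$-summand. Applying the product splitting once more and the standard Cheeger deformation identities (see \cite{cheeger,mutterz} and \cite[Section~3.1]{Cavenaghi2022}), each $t$-dependent contribution can be reorganized into expressions built from the functions $t\mapsto\tfrac{t}{1+t\mu_i}$, where the $\mu_i$ are the eigenvalues of the orbit tensor of $\ga$ relative to $Q$; monotonicity of these functions then yields $\partial_t\tilde z_t\ge 0$. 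The main obstacle is exactly this bookkeeping: one has to diagonalize $O$ and $O_F$ simultaneously with respect to $Q$, track how the Cheeger contraction of the $G$-orbits in $P$ interacts with the horizontal shift in \eqref{eq:horbarpi}, and check that no cross term destroys the monotonicity --- which is where the detailed computations of \cite[Section~3.1]{Cavenaghi2022} enter.
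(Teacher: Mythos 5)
The paper itself contains no proof of this statement: it is imported verbatim as \cite[Theorem 3.1]{Cavenaghi2022}, so your outline has to stand on its own. Its opening moves are fine and certainly parallel the cited source: $\textsl{h}_t$ is the quotient metric of the Riemannian submersion $\bar\pi\colon(P\times F,\ga_t+\ga_F)\to M$, the Gray--O'Neill formula gives $\tilde{\kappa}_{\textsl{h}_t}(\tilde X,\tilde Y)=\kappa_{\ga_t+\ga_F}(\tilde X^{h},\tilde Y^{h})+\tfrac34\|[\tilde X^{h},\tilde Y^{h}]^{v}\|^2_{\ga_t+\ga_F}$, and the unreduced curvature of a product metric splits with no mixed term.

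The trouble is in the two steps that carry the actual content. (i) Horizontality with respect to $\ga_t+\ga_F$ is governed by the orbit tensor of $\ga_t$, namely $O_t=(\mathrm{Id}+tO)^{-1}O$, not by $O$; formula \eqref{eq:horbarpi} describes only the $(\ga+\ga_F)$-horizontal space. Moreover, even at $t=0$ your own normalization is inconsistent with the identification you then make: if the lift of $\tilde X=X+X_F+U^*$ (represented as $(X,X_F+U^*)$ via \eqref{eq:conventionU*}) is written as $\bigl(X-(O^{-1}O_FW_X)^{\vee},X_F+W_X^*\bigr)$, then requiring it to project to $\tilde X$ and to be orthogonal to the diagonal orbit forces $W_X=(O+O_F)^{-1}OU$ (and $W_X=(O_t+O_F)^{-1}O_tU$ for $\ga_t+\ga_F$), not $W_X=-O_F^{-1}OU$. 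The pair appearing in the statement, with $P$-part $X+U^{\vee}$ and $F$-part $X_F-(O_F^{-1}OU)^*$, is instead the parameterization \eqref{eq:horbarpi} evaluated at $W=-O_F^{-1}OU$, i.e.\ a fixed, $t$-independent labeling of the $(\ga+\ga_F)$-horizontal space; relating the genuinely $t$-dependent lift of $\tilde X$ to that labeling is precisely where the ``appropriate reparameterization'' and part of $\tilde z_t$ come from, and your proposal skips it, so the claim that ``the second summand becomes exactly the middle term'' does not follow as written. (ii) The heart of the theorem is that everything left over --- the O'Neill term together with all corrections produced by this reparameterization --- assembles into a single quantity $\tilde z_t$ that is non-decreasing in $t$. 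The reparameterization corrections have no a priori sign, the proposed monotonicity argument via the functions $t\mapsto t/(1+t\mu_i)$ is only announced (no simultaneous diagonalization of $O$ and $O_F$ is carried out, and the cross terms between the $P$-factor, the $F$-factor and the O'Neill term are not controlled), and at exactly this point you defer to \cite[Section 3.1]{Cavenaghi2022}, the computation from which the theorem is being quoted in the first place. As it stands, the proposal is a plausible road map rather than a proof.
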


It is possible to adapt the proof of Lemmas 5 and 7 in \cite{Cavenaghi2022} to obtain
 \begin{lemma}\label{thm:limitintermediate}Fix a point $(p,f)\in P\times F$ such that the $G$-orbit at $f$ is principal and $X+X_F+U^*\in T_{\bar{\pi}(p,f)}M$. Choose a non-negative integer $1\leq d\leq 5$. Made a choice of non-negative integers $n_{\cal H_f^F}, n_{\lie m_f}$ constrained by the dimensions of $F$ and its $G$-orbits and satisfying $d = n_{\cal H_f^F} + n_{\lie m_f}$, we have 
		\begin{equation}\label{eq:asymptotic}
		 	\lim_{t\to \infty}\Ricci_{d,\textsl h_t}(X + X_F + U^*) = \Ricci_{n_B,\ga_B}(d\pi X) + \Ricci^{\mathbf{h}}_{n_{\cal H_f^F}}(X_F) + \sum_{j=1}^{n_{\cal H_f^F}}\frac{3}{4}|[X,e^{F}_j]^{\lie m_f}|_{g_F}^2 + \sum_{k=1}^{n_{\lie m_f}}\frac{1}{4}\|[v_k(0),U]\|_Q^2.
		\end{equation}    
 \end{lemma}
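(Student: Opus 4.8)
The plan is to obtain \eqref{eq:asymptotic} as a limiting case of Theorem \ref{thm:secnew}, following the strategy used for the sectional-curvature limit in \cite[Lemmas 5 and 7]{Cavenaghi2022}. First I would fix the point $(p,f)$ with $G$-orbit at $f$ principal, so that the orbit tensors $O,O_F$ are genuinely positive-definite at $(p,f)$, and fix a unit vector $\tilde X = X + X_F + U^* \in T_{\bar\pi(p,f)}M$. Choose a $\textsl h_t$-orthonormal set $\{\tilde e_1,\dots,\tilde e_5\}$ completing $\tilde X$ to an orthonormal basis; by \eqref{eq:horbarpi} each $\tilde e_j$ has the form $e^B_j + e^F_j + (\ast)^*$, and I would arrange (using the block structure $T_{\bar\pi(p,f)}M \cong \lie m_B \oplus \cal H_f^F \oplus \lie m_f$) that $n_B$ of them lie along the base directions $\lie m_B$, $n_{\cal H_f^F}$ along the horizontal fiber directions $\cal H_f^F$, and $n_{\lie m_f}$ along the orbit directions $\lie m_f$, where $n_B = n_{\cal H_f^F} + n_{\lie m_f}$ is forced by $\dim M$ and $\Ricci_d$ is the sum of $d$ sectional curvatures $\sum_j \tilde\kappa_{\textsl h_t}(\tilde X, \tilde e_j)$.

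The core step is then to take $t\to\infty$ termwise in $\sum_{j=1}^d \tilde\kappa_{\textsl h_t}(\tilde X,\tilde e_j)$ using the decomposition in Theorem \ref{thm:secnew}. The Cheeger-deformation term $\kappa_{\ga_t}(X+U^\vee, \cdot)$ converges: as $t\to\infty$ the Cheeger metric $\ga_t$ collapses the $G$-fibers of $P$, so the unreduced sectional curvature of $\ga_t$ along base-type planes converges to the sectional curvature of the base $\ga_B = (P\times F)/G$ pulled back via $d\pi$, and along orbit-type planes the relevant limit picks up the $\frac14\|[v_k(0),U]\|_Q^2$ terms — this is exactly the content of the asymptotic formula for Cheeger deformations (O'Neill-type terms for the submersion $P \to P/G$), so summing over the appropriate indices yields $\Ricci_{n_B,\ga_B}(d\pi X) + \sum_{k=1}^{n_{\lie m_f}}\frac14\|[v_k(0),U]\|_Q^2$. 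The fiber term $K_{\textsl g_F}(X_F - (O_F^{-1}OU)^*, \cdot)$ is $t$-independent except through the argument shift; restricting to the $n_{\cal H_f^F}$ horizontal-fiber directions $e^F_j$ gives the intrinsic partial Ricci curvature $\Ricci^{\mathbf h}_{n_{\cal H_f^F}}(X_F)$ of the fiber. Finally the error term $\tilde z_t$, being non-decreasing in $t$ and bounded above (the total $\Ricci_{d,\textsl h_t}$ stays bounded), converges, and a direct inspection of its explicit form — the $A$-tensor / second-fundamental-form contribution of the bundle submersion — shows its limit along the chosen plane $\tilde X \wedge \tilde e_j$ equals $\frac34 |[X,e^F_j]^{\lie m_f}|^2_{g_F}$ when $e_j$ is a horizontal-fiber direction and $0$ otherwise, producing the term $\sum_{j=1}^{n_{\cal H_f^F}}\frac34|[X,e^F_j]^{\lie m_f}|^2_{g_F}$.

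The main obstacle I anticipate is the bookkeeping of which O'Neill / $A$-tensor cross-terms survive in the limit and land in which summand: Theorem \ref{thm:secnew} already packages the $\ga_t$-side and the $\textsl g_F$-side cleanly, but $\tilde z_t$ mixes base, fiber-horizontal and orbit directions, and one must verify that in the $t\to\infty$ limit the only non-vanishing contributions are the two bracket terms displayed, with no residual cross-coupling between the base Ricci piece and the fiber Ricci piece. I would handle this exactly as in \cite[Lemma 7]{Cavenaghi2022}: decompose $\tilde z_t$ via \eqref{eq:horbarpi} into its homogeneous blocks, use that Cheeger deformation sends $O^{-1}O_F \to 0$-type shifts in the collapsing directions while fixing the base directions, and check the three index-type cases for $e_j$ separately. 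The secondary technical point is to justify termwise passage to the limit in the finite sum (immediate, since it is a finite sum of convergent terms) and to confirm the constraint $d = n_{\cal H_f^F} + n_{\lie m_f}$ is compatible with $n_B$ being the base-completion count, i.e. that the chosen orthonormal completion can indeed be split along the three blocks in the prescribed proportions — which holds because the three blocks are mutually $\textsl h_t$-orthogonal for every $t$.
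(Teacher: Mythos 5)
Your proposal follows essentially the same route as the paper: the paper offers no independent proof of this lemma, asserting only that it follows by adapting Lemmas 5 and 7 of \cite{Cavenaghi2022}, and your argument — apply Theorem \ref{thm:secnew} to the chosen directions, split them into base, horizontal-fiber and orbit blocks, and pass to the limit $t\to\infty$ term by term, identifying the Cheeger limit, the fiber contribution $\Ricci^{\mathbf{h}}_{n_{\cal H_f^F}}(X_F)$, and the limit of $\tilde z_t$ as the two bracket terms — is precisely that adaptation. Two bookkeeping slips, neither affecting the structure of the argument: the relation you state, $n_B = n_{\cal H_f^F} + n_{\lie m_f}$ ``forced by $\dim M$'', should be the count $n_B + n_{\cal H_f^F} + n_{\lie m_f}$ of all chosen directions (this is how the indices are used in the proof of Proposition \ref{prop:firstpart}), and the orthonormal completion of $\tilde X$ in $T_{\bar\pi(p,f)}M$ has $\dim M - 1 = 5 + \dim F$ elements rather than five.
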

\iffalse
\begin{corollary}\label{cor:formulae}
    If $G$ is Abelian, for any $(p,f)\in P\times F$ such that $G$-orbit at $f$ is principal and $X+X_F+U^*\in T_{\bar{\pi}(p,f)}M$,
		\begin{equation}\label{eq:asymptotic2}
		 	\lim_{t\to \infty}\Ricci_{d,\textsl h_t}(X + X_F + U^*) = \Ricci_{d_B,\ga_B}(d\pi X) + \Ricci^{\mathbf{h}}_{n_{\cal H_f^F}}(X_F) + 3\sum_{j=1}^{n_{\cal H_f^F}}|\h[X,e^{F}_j]^{\lie m_f}|_{g_F}^2. 
		\end{equation}    

  If $F$ is a homogeneous space, then
  \begin{equation}\label{eq:asymptotic3}
  		\lim_{t\to \infty}\Ricci_{d,\textsl h_t}(X + X_F + U^*) = \Ricci_{d_B,\ga_B}(d\pi X) + \sum_{k=1}^{n_{\lie m_f}}\frac{1}{4}\|[v_k(0),U]\|_Q^2.
		\end{equation}

  If $F$ is a homogeneous space and $G$ is Abelian then 
	\begin{equation}\label{eq:asymptotic4}
		\lim_{t\to \infty}\Ricci_{d,\textsl h_t}(X + X_F + U^*) = \Ricci_{n_B,\ga_B}(d\pi X).
		\end{equation}     
\end{corollary}
\fi
\begin{remark}
    The detailed description of each term appearing in equation \eqref{eq:asymptotic} is given in the proof of the next proposition.
\end{remark}
We can now prove the first part of the thesis in Theorem \ref{thm:associated}, Proposition \ref{prop:firstpart} below.
\begin{proposition}\label{prop:firstpart}
    Let $F\rightarrow M \rightarrow \mathrm{SU}(3)/\mathrm{T}^2$ as in the hypotheses of Theorem \ref{thm:associated}. Then for any choice of invariant metric $\ga_B$ on $B = \mathrm{SU}(3)/\mathrm{T}^2$ with positive sectional curvature, after choosing a Riemannian metric on $F$ appropriately, we can regard the total space $M$ with a Riemannian submersion metric of positive $\Ricci_{d_F+1}$.
\end{proposition}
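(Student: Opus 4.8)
The plan is to invoke Theorem \ref{thm:secnew} and Lemma \ref{thm:limitintermediate} applied to the fiber bundle $F\hookrightarrow M\to \mathrm{SU}(3)/\mathrm{T}^2$, whose structure group is $\mathrm{SU}(3)$, and to exploit the hypothesis that a principal $\mathrm{SU}(3)$-orbit on $F$ has maximal closed isotropy: by Lemma \ref{lem:propersubgroups} this restricts $F$ to one of finitely many homogeneous-type models, and in every case the manifold part of $F/\mathrm{SU}(3)$ is at most one-dimensional, so $\bar\ga_F\in\mathcal{R}^{\rm{\, sec-Ric}}_{d_F}$ is a genuinely usable hypothesis. First I would set up the Cheeger deformation $\ga_t$ of the metric $\ga$ on the associated principal bundle $P$ induced by the positively curved invariant metric $\ga_B$ on $B=\mathrm{SU}(3)/\mathrm{T}^2$, form the product metric $\ga_t+\ga_F$ on $P\times F$, and push it down to a Riemannian submersion metric $\mathbf h_t$ on $M$ via $\bar\pi$ as in \eqref{eq:conventionU*}. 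Fixing a point where the $\mathrm{SU}(3)$-orbit on $F$ is principal and a tangent vector $\tilde X = X + X_F + U^*$ together with an orthonormal family $\tilde X_1,\dots,\tilde X_{d_F+1}$, I would write $\Ricci_{d_F+1,\mathbf h_t}(\tilde X) = \sum_{i}\tilde\kappa_{\mathbf h_t}(\tilde X,\tilde X_i)$ and split the sum according to the decomposition in Theorem \ref{thm:secnew}.

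The core of the argument is a limiting computation: using Lemma \ref{thm:limitintermediate} with the splitting $d_F+1 = n_{\cal H_f^F} + n_{\lie m_f}$ chosen so that $n_{\cal H_f^F}$ counts directions inside the horizontal distribution $\cal H_f^F$ of the $\mathrm{SU}(3)$-action on $F$ (i.e.\ $n_{\cal H_f^F} = d_F$, since $F/\mathrm{SU}(3)$ has manifold part of dimension $\le 1$ we take $X_F$ spanning it, and possibly one further base direction) and $n_{\lie m_f}$ counts orbit directions, one obtains
\[
\lim_{t\to\infty}\Ricci_{d_F+1,\mathbf h_t}(X+X_F+U^*) = \Ricci_{n_B,\ga_B}(d\pi X) + \Ricci^{\mathbf h}_{n_{\cal H_f^F}}(X_F) + \sum_{j=1}^{n_{\cal H_f^F}}\frac34\,\big|[X,e^F_j]^{\lie m_f}\big|_{\ga_F}^2 + \sum_{k=1}^{n_{\lie m_f}}\frac14\,\big\|[v_k(0),U]\big\|_Q^2.
\]
Each term on the right is non-negative: the last two are manifestly so; $\Ricci^{\mathbf h}_{n_{\cal H_f^F}}(X_F)$ is, up to identification, an intermediate Ricci curvature of the orbit distance metric $\bar\ga_F$ on the one-dimensional manifold part of $F/\mathrm{SU}(3)$, which by hypothesis (b) lies in $\mathcal{R}^{\rm{\, sec-Ric}}_{d_F}$, hence is positive when the relevant directions are present; and $\Ricci_{n_B,\ga_B}(d\pi X)>0$ because $\ga_B$ has positive sectional curvature, so every intermediate Ricci curvature with $n_B\ge 1$ is positive (and $n_B\ge 1$ holds once $d\pi X\neq 0$; the case $d\pi X = 0$ must be handled by noting that then $X$ is vertical for $\pi$ and the contribution is carried by the bracket terms and the $F$-part). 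Summing over an orthonormal $(d_F+1)$-frame and checking that at least one summand is strictly positive for every unit $\tilde X$ gives $\lim_{t\to\infty}\Ricci_{d_F+1,\mathbf h_t}(\tilde X)>0$ uniformly on the unit tangent bundle of $M$, by compactness of $M$.

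Finally, since $\Ricci_{d_F+1,\mathbf h_t}$ depends continuously on $t$ and the limit as $t\to\infty$ is bounded below by a positive constant on the (compact) unit sphere bundle, there is $t_0$ such that $\Ricci_{d_F+1,\mathbf h_t}>0$ for all $t\ge t_0$; reparametrizing $t\mapsto 1/t$ (or simply passing to large $t$) yields the claimed one-parameter family $\ga_t\in\mathcal{R}^{\rm{\, sec-Ric}}_{d_F+1}$ for $|t|$ small in the statement's normalization. I expect the main obstacle to be bookkeeping at the non-principal and degenerate points and, more seriously, the case analysis over the admissible fibers $F$ from Lemma \ref{lem:propersubgroups}: one must verify in each model that the orbit tensor $O_F$ only degenerates on a set whose complement still supports enough horizontal directions to realize the chosen splitting $d_F+1 = n_{\cal H_f^F}+n_{\lie m_f}$, and that the identification of $\Ricci^{\mathbf h}_{n_{\cal H_f^F}}(X_F)$ with the intermediate Ricci curvature of $\bar\ga_F$ is valid in low dimension (where $\mathcal{R}^{\rm{\, sec-Ric}}_{d_F}$ for $d_F\ge \dim(F/\mathrm{SU}(3))$ is a vacuous or boundary condition, forcing $n_{\cal H_f^F}$ to stay within range). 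The degeneracy of $O_F$ at singular orbits is controlled by Lemma \ref{lem:blowingup}, which I would cite rather than reprove.
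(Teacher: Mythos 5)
Your overall frame (Cheeger-deform on $P$, push down via $\bar\pi$, apply Lemma \ref{thm:limitintermediate}, pass from the $t\to\infty$ limit to large finite $t$ by compactness) matches the paper's setup, but the core of your argument has a genuine gap: you claim every term on the right-hand side of the limit formula is non-negative, and in particular that $\Ricci^{\mathbf{h}}_{n_{\cal H_f^F}}(X_F)$ is ``up to identification'' an intermediate Ricci curvature of the quotient metric $\bar\ga_F$ and hence positive. That identification is false as stated: by the O'Neill-type identity used in the paper, the quotient curvature equals the \emph{horizontal} curvature of $\ga_F$ \emph{plus} the bracket terms, $\Ricci_{F^{reg}/G,n}(X_F)=\Ricci^{\mathbf{h}}_{\ga_F,n}(X_F)+\sum_j\tfrac34|[X,e^F_j]^{\lie m_f}|^2_{\ga_F}$, so the horizontal term alone may well be negative. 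Moreover the hypothesis $\bar\ga_F\in\mathcal{R}^{\rm{\, sec-Ric}}_{d_F}$ only bites when $n_{\cal H_f^F}\geq d_F$, and you cannot \emph{choose} the splitting $d_F+1=n_B+n_{\cal H_f^F}+n_{\lie m_f}$: positivity must be checked for \emph{all} admissible distributions of the $d_F+1$ directions among base, fiber-horizontal and orbit directions, including those with $1\leq n_{\cal H_f^F}<d_F$ and $n_B\geq 1$, where neither the base term nor the fiber term alone controls the possibly negative horizontal contribution. This combinatorial case analysis is exactly what the paper's proof supplies: it argues by contradiction, uses Lemma \ref{lem:propersubgroups} (every non-principal orbit is a fixed point, and principal orbits $\mathrm{SU}(3)/H$ have $\Ricci_2>0$ in the normal homogeneous metric) to force $n_{\lie m_f}=0$ and then $n_B=0$, rescales $\ga_B$, handles singular points via Lemma \ref{lem:blowingup}, and finally kills the remaining case $n_{\cal H_f^F}+n_{\lie m_f}=d_F+1$ by a canonical variation of $\ga_F$ along the orbits, which enlarges the orbit contribution without changing $\Ricci^{\mathbf{h}}_{\ga_F,n_{\cal H_f^F}}(X_F)$. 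None of these steps appears in your sketch, and without them the claimed term-by-term positivity does not hold.

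A secondary problem is your assertion that the manifold part of $F/\mathrm{SU}(3)$ is at most one-dimensional in every admissible case. This is not part of the hypotheses, is not what Lemma \ref{lem:propersubgroups} says (it constrains the principal \emph{orbits}, not the quotient), and if it were true it would make hypothesis (b) of Theorem \ref{thm:associated} essentially vacuous, as you yourself note at the end; the paper's proof makes no such dimensional claim and uses hypothesis (b) through the inequality $\Ricci_{F^{reg}/G,n_{\cal H^F_f}}(X_F)>0$ for $n_{\cal H_f^F}\geq d_F$ at regular points. Finally, your treatment of non-principal orbits is too thin: Lemma \ref{thm:limitintermediate} is stated only at points whose $\mathrm{SU}(3)$-orbit in $F$ is principal, so the singular set must be dealt with explicitly (in the paper, via the fixed-point statement of Lemma \ref{lem:propersubgroups} together with the finite Cheeger deformation of Lemma \ref{lem:blowingup}), not merely by remarking that the orbit tensor $O_F$ may degenerate there.
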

\begin{proof}
Once Lemma \ref{thm:limitintermediate} is in hand, we approach the proof by contradiction, relying on the analysis in purely combinatorial aspects. 

Observe that each of the terms in equation \eqref{eq:asymptotic} are non-negative except, maybe, for $\Ricci_{n_B,\ga_B}(d\pi X) + \Ricci_{n_{\cal H_f^F}}^{\mathbf{h}}(X_F)$. In this manner, one must impose the needed constraints on such terms. Let us assume that $\ga_B$ has positive sectional curvature. We recall that $\mathrm{SU}(3)/\mathrm{T}^2$ always admits one of such a metric; see Theorem \ref{ithm:main}.
    
    Given any $G$-invariant Riemannian metric $\ga_F$ such that $\Ricci_{F^{reg}/G,d_F} > 0$ on $F^{reg}/G$, $d_F \geq 1$, we regard $M$ with the Riemannian submersion metric $\aga_1 = \aga$ which is a Riemannian connection metric for which fibers $F$ are totally geodesic, obtained exactly as in the Proposition 2.7.1 in \cite{gw}.

    Now observe that for each $f\in F$, we have that $T_fF \cong \lie m_f\oplus \cal H_f^{F}$, where $\lie m_f \subset \lie \mathrm{su}(3)$ is isomorphic with the tangent space to the $\mathrm{SU}(3)$-orbit $\mathrm{SU}(3)f$ and $\cal H^F_f := (T_f\mathrm{SU}(3)f)^{\perp_{\ga_F}}$. Hence, any basis for $T_fF$ has $\dim \lie m_f + \dim \cal H_f^F$ elements, which we denote by $\{v_1(0),\ldots,v_{\dim \lie m_f}(0)\}\cup \{e_1^F,\ldots,e_{\dim \cal H_f^F}^F\}$. Therefore, when picking $d = d_F{+}1$ vectors out of a basis for $T_pM$, one has $d = d_F{+}1 = n_B + n_{\lie m_f} + n_{\cal H_f^F}$ where $0\leq n_B \leq 6 = \dim \mathrm{SU}(3)/\mathrm{T}^2,~0 \leq n_{\lie m_f} + n_{\cal H_f^F}\leq \dim F \leq 8 = \dim \mathrm{SU}(3)$ and $n_B$ is the number of elements in this $d_F+1$-cardinality set which belongs to elements of the horizontal lifting $\cal{L}_{\bar \pi}T_b\mathrm{SU}(3)/\mathrm{T}^2$,~$n_{\lie m_f}$ the number of such elements which belong to $\lie m_f$ and $n_{\cal H_f^F}$ the number of elements which belongs to $\cal H^F_p$.

    Suppose a point $p\in M$ exists with orthonormal vectors $X\perp X_F + U^*$ satisfying
    \begin{equation}\label{ineq}\Ricci_{h_{\infty},d}(X+X_F+U^*) \leq 0\end{equation}
    where the former is the abuse notation for $\lim_{t\to\infty}\Ricci_{h_{t,d}}(X+X_F+U^*) \leq 0$. Then, $\Ricci_{n_B,\ga_B}(X) + \Ricci_{n_{\cal H_f^F}}^{\mathbf{h}}(X_F) \leq 0$.

    If $n_B \neq 0$ then $\Ricci_{n_B,\ga_B}(X) > 0$ (since $\ga_B$ has positive sectional curvature) and so $\Ricci_{n_{\cal H_f^F}}^{\mathbf{h}}(X_F) < 0$. Hence, since $\Ricci_{F^{reg}/G,d_F} > 0$, we have that either $1\leq n_{\cal H_f^F} < d_F$ or $f\in F\setminus F^{reg}$, that is, $f$ does not lie in a principal orbit, since for points in a principal orbit we have
\[\Ricci_{F^{reg}/G,n_{\cal H^F}}(X_F) = \Ricci_{\ga_F,n_{\cal H^F_f}}^{\mathbf{h}}(X_F){+}\sum_{j=1}^{n_{\cal H_f^F}}\frac{3}{4}|[X,e^{F}_j]^{\lie m_f}|_{g_F}^2\]
and the former is positive if $X_F \neq 0$ and $n_{\cal H_f^F}\geq d_F$.

Now observe that the quantity below
\[\sum_{j=1}^{n_{\cal H_f^F}}\frac{3}{4}|[X,e^{F}_j]^{\lie m_f}|_{g_F}^2 + \sum_{k=1}^{n_{\lie m_f}}\frac{1}{4}\|[v_k(0),U]\|_Q^2 = \Ricci_{ n_{\lie m_f}+n_{\cal H_f^F},\mathrm{SU}(3)f}(X_F+U^*) \]
    is precisely the $(n_{\lie m_f}+n_{\cal H_f^F})\mathrm{th}$-Ricci curvature of the orbit $\mathrm{SU}(3)f$ in the normal homogeneous metric, with $n_{\lie m_f}+n_{\cal H_f^F} \geq 1$, where $Q$ is any bi-invariant metric on $\mathrm{SU}(3)$. Moreover, $n_{\lie m_f} = 0$ since under our assumption, such curvature is positive for any $n_{\lie m_f} \geq 1$, see Lemma \ref{lem:propersubgroups} below. In any case, since $\ga_B$ has positive sectional curvature, up to re-scaling this metric since $\Ricci_{n_B,\ga_B}(X) > 0$, inequality \eqref{ineq} cannot hold. Therefore, $f\in F\setminus F^{reg}$. 
    
However, this can not also hold since, up to switch $\ga_F$ to a finite Cheeger deformation of it (Lemma \ref{lem:blowingup}), inequality \eqref{ineq} could not hold as well. Therefore, $n_B = 0$ and $f\in F^{reg}$. Moreover,  $n_{\cal H_f^F}+n_{\lie m_f} = d_F{+}1$ and inequality \eqref{ineq} is translated in
\begin{equation}\label{ineq1}
    \Ricci_{F^{reg}/G,n_{\cal H^F}}(X_F) + \sum_{k=1}^{n_{\lie m_f}}\frac{1}{4}\|[v_k(0),U]\|_Q^2\leq 0,
\end{equation}
that is precisely $\lim_{t\to\infty}\Ricci_{d_F+1,(\ga_F),t}(X_F{+}U^*)$, i.e., the $(d_F+1)$-Ricci curvature of the limit as $t\to\infty$ of a Cheeger deformation $(\ga_F)_t$ of $\ga_F$, see Lemmas 2.6 and 4.2 in \cite{Cavenaghi2023}. Since $d_F{+1}\geq 2$ Lemma \ref{lem:propersubgroups} concludes the proof, once it yields a contradiction with inequality \eqref{ineq1}: 
\[\Ricci^{\mathbf{h}}_{\ga_F,n_{\cal H^F_f}}(X_F) + \Ricci_{ n_{\lie m_f}+n_{\cal H_f^F},\mathrm{SU}(3)f}(X_F+U^*) \leq 0\]
but
\[\Ricci_{ n_{\lie m_f}+n_{\cal H_f^F},\mathrm{SU}(3)f}(X_F+U^*)>0\]
can be made arbitrarily large after a canonical variation, that is, scaling the metric $\ga_F$ along $\mathrm{SU}(3)f$, which does not change $\Ricci^{\mathbf{h}}_{\ga_F,n_{\cal H^F_f}}(X_F)$. \qedhere 
\end{proof}
\begin{lemma}\label{lem:propersubgroups}
    Assume that a principal orbit of the $\mathrm{SU}(3)$-action on $F$ has as isotropy subgroup a maximal closed subgroup. Then every non-principal orbit is a fixed point, that is, $\mathrm{SU}(3)f = f$ for $f\in F\setminus F^{reg}$. Moreover, for every point $f'$ in a principal orbit (i.e., $f'\in F^{reg})$, the homogeneous space $\mathrm{SU}(3)f'$ has positive $\Ricci_2$ at the normal homogeneous space metric.
\end{lemma}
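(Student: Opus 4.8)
The plan is to treat the two assertions of the lemma separately, the first by transformation-group theory and the second by reducing it to a Lie-algebraic inequality that I then check case by case.

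For the first assertion I would invoke the structure theory of compact transformation groups. Let $H\subseteq\mathrm{SU}(3)$ be a principal isotropy group of the action on $F$, a maximal closed subgroup by hypothesis. Since $F$ is compact and $\mathrm{SU}(3)$ acts smoothly, the action has only finitely many orbit types, there is a unique principal orbit type up to conjugacy, every isotropy group $\mathrm{SU}(3)_f$ contains a conjugate of $H$, and $f$ lies on a principal orbit precisely when $\mathrm{SU}(3)_f$ is conjugate to $H$ (this is the slice theorem; see, e.g., Bredon's book on compact transformation groups). Hence for $f\in F\setminus F^{reg}$ one has $gHg^{-1}\subsetneq\mathrm{SU}(3)_f$ for a suitable $g$; as $gHg^{-1}$ is again a maximal closed subgroup, the only closed subgroup strictly containing it is $\mathrm{SU}(3)$ itself, so $\mathrm{SU}(3)_f=\mathrm{SU}(3)$ and $\mathrm{SU}(3)f=\{f\}$. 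No computation is needed here.

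For the second assertion I would first reduce to a centralizer bound. Write the principal orbit as $\mathrm{SU}(3)/H$ with reductive decomposition $\mathfrak{su}(3)=\mathfrak h\oplus\mathfrak m$, $\mathfrak m=\mathfrak h^{\perp}$ relative to $-\mathcal K$ (with $\mathcal K$ the Cartan--Killing form), and equip it with the normal homogeneous metric $-\mathcal K|_{\mathfrak m}$. For this metric, which is naturally reductive, the curvature formula quoted before Table \ref{table:1} (with $U\equiv 0$) gives, for an orthonormal pair $X,Y\in\mathfrak m$,
\[
K(X,Y)=\tfrac14\,\|[X,Y]_{\mathfrak m}\|^2+\|[X,Y]_{\mathfrak h}\|^2\ \geq\ 0,
\]
vanishing exactly when $[X,Y]=0$ in $\mathfrak{su}(3)$. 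For a fixed unit $X\in\mathfrak m$ the quantity $\Ricci_2(X)=K(X,X_1)+K(X,X_2)$ is the trace of the positive semidefinite quadratic form $Y\mapsto\tfrac14\|(\mathrm{ad}_X Y)_{\mathfrak m}\|^2+\|(\mathrm{ad}_X Y)_{\mathfrak h}\|^2$ restricted to the $2$-plane $\mathrm{span}(X_1,X_2)\subseteq X^{\perp}$; hence $\Ricci_2>0$ for all admissible orthonormal triples if and only if $X^{\perp}$ contains no $2$-plane annihilated by $\mathrm{ad}_X$, i.e.
\[
\dim\bigl(\mathfrak z_{\mathfrak{su}(3)}(X)\cap\mathfrak m\bigr)\leq 2\qquad\text{for every unit }X\in\mathfrak m .
\]
So it suffices to verify this bound, which I would do along the classification of maximal closed subgroups of $\mathrm{SU}(3)$: up to conjugacy these are $\mathrm{S}(\mathrm U(2)\times\mathrm U(1))$, the irreducibly embedded $\mathrm{SO}(3)$, the normalizer $N(\mathrm T^2)$ of a maximal torus (whose orbit is locally $\mathrm{SU}(3)/\mathrm T^2$), and finitely many finite primitive subgroups (Borel--de Siebenthal for the maximal-rank connected ones, the real-form analysis for $\mathrm{SO}(3)$, and Blichfeldt's list for the finite ones). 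For $H=\mathrm S(\mathrm U(2)\times\mathrm U(1))$ the space is $\CP^2$ with its Fubini--Study metric, and for $H=N(\mathrm T^2)$ the local geometry is that of $\mathrm{SU}(3)/\mathrm T^2$ at $x=y=z$, which by Table \ref{table:1} has all $K_{ij}>0$; in both cases the normal metric has positive sectional curvature, so the bound holds trivially (indeed $\Ricci_1>0$). The remaining cases $H=\mathrm{SO}(3)$ and $H$ finite, where $\mathfrak m=i\cdot\{\text{traceless symmetric }3\times3\text{ real matrices}\}$ and $\mathfrak m=\mathfrak{su}(3)$ respectively, are to be checked directly.

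The main obstacle is precisely this last, direct verification. For $H=\mathrm{SO}(3)$ and for finite $H$ the module $\mathfrak m$ is $5$- or $8$-dimensional, so a generic-element argument is not enough: the delicate directions are the singular elements of $\mathfrak{su}(3)$ (those with a repeated eigenvalue), whose centralizer in $\mathfrak{su}(3)$ is $4$-dimensional, and one must control exactly how much of such a centralizer lies inside $\mathfrak m$ — equivalently, show that no invariant $2$-plane orthogonal to $X$ is killed by $\mathrm{ad}_X$ — for every unit direction $X$. This bookkeeping over the singular locus is where the real work lies.
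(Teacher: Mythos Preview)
Your plan matches the paper's proof closely. The fixed-point argument is identical, and your reduction of the $\Ricci_2$ claim to the centralizer bound $\dim(\mathfrak z_{\mathfrak{su}(3)}(X)\cap\mathfrak m)\leq 2$ is exactly what the paper obtains by citing Proposition~3.1 of \cite{DGM}; your case split over the maximal closed subgroups is the same list the paper draws from \cite{forger}. For the cases you flag as the ``obstacle'' --- finite $H$ and $\mathrm{SO}(3)$ --- the paper does not carry out the singular-direction bookkeeping either: it dispatches the finite case (hence locally $\mathrm{SU}(3)$) via Remark~3.6 in \cite{DGM} and item~(i) by a brief appeal to the brackets in Table~\ref{table:1}, so the work you anticipate is simply delegated to the literature.

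One slip to correct: for $N(\mathrm T^2)$ you claim the normal metric on $\mathrm{SU}(3)/\mathrm T^2$ has positive sectional curvature because ``by Table~\ref{table:1} all $K_{ij}>0$''. Table~\ref{table:1} lists only the coordinate $2$-planes $X_i\wedge X_j$; the normal homogeneous metric ($x=y=z$) on the Wallach flag carries genuine zero-curvature planes (any Cartan subalgebra $\mathrm{Ad}(g)\mathfrak t^2\subset\mathfrak m$ orthogonal to $\mathfrak t^2$ --- obtained for instance from $g_{jk}=\tfrac{1}{\sqrt3}e^{2\pi i jk/3}$ --- is one), so $\Ricci_1>0$ fails there. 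The lemma only requires $\Ricci_2>0$, and the paper covers this case by invoking Theorem~\ref{ithm:main} rather than strict positivity of sectional curvature; you should do the same, or run your centralizer bound on this $\mathfrak m$ directly.
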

\begin{proof}
    According to our hypothesis if $f$ lies in a principal orbit $\mathrm{SU}(3)f$ we have that
    \[\mathrm{SU}(3)f \cong \mathrm{SU}(3)/H\]
    where $H$ is a maximal proper closed subgroup of $\mathrm{SU}(3)$. According to \cite[Section 8.1, p. 1006]{forger} we have the following possibilities for $H$: 
    \begin{enumerate}[(i)]
        \item Type 1: \textbf{normalizer of a maximal connected subgroup}. 
        \begin{align*}
& \mathrm{U}(2) \cong \mathrm{S}(\mathrm{U}(2)\times \mathrm{U}(1))\\
& \zeta_3 \times \mathrm{SO}(3)
\end{align*}
\item Type 2: \textbf{finite maximal closed subgroup}.
\begin{align*}
& \zeta_3 \times \mathrm{GL}(3,2) \equiv \zeta_3\times \mathrm{Gl}(3,\bb F_2) \\
& 3.\mathrm{A}_6\\
& \mathrm{S}(\mathrm{Cl}_1(3))
\end{align*}
\item Type 3: \textbf{normalizer of a positive dimensional non-maximal connected subgroup}.
\[
\mathrm{S}(\mathrm{U}(1)\times \mathrm{U}(1) \times \mathrm{U}(1))\rtimes \mathrm{S}_3
\]
    \end{enumerate}
    where $\mathrm{S}(\mathrm{Cl}_1(3))$ is the determinant minus one subgroup of the single qutrit Clifford group, i.e., Shephard-Todd-25, \cite{shephard_todd_1954}. The group $3.\mathrm{A}_6$ is known as the Valentiner group (\cite{valentiner_1889}), and $\zeta_3$ is the short notation for the Lie group generated as $\langle \zeta_3I\rangle = \left\langle \begin{pmatrix}
        e^{2\pi k\mathbf{i}/3} & 0\\
        0 & e^{-2\pi k\mathbf{i}/3}
    \end{pmatrix} : k\in \{0,1,2\}  \right\rangle$. 
\iffalse
Hence, we have the Lie algebras of $H$ with corresponding dimensions of $\mathrm{SU}(3)/H$:
\begin{enumerate}[(i)]
    \item $\lie u(2), ~(0,\lie so(3))\cong \lie so(3)$,~$\dim \mathrm{SU}(3)/H = 4, 5$
    \item $0$,~$\dim \mathrm{SU}(3)/H = 8$
    \item $(\lie t^2,0)\cong \lie t^2$,~$\dim \mathrm{SU}(3)/H = 6$.
\end{enumerate}

Since
\begin{align*}
   \Ricci_{2,\mathrm{SU}(3)f}(X_F+U^*) &= \sum_{j=1}^{n_{\cal H_f^F}}\frac{3}{4}\|[X_F,e_j^F]^{\lie m_f}]\|^2_Q + \sum_{k=1}^{n_{\lie m_f}}\frac{1}{4}\|[v_k(0),U]\|_Q^2
\end{align*}
where $n_{\cal H_f^F}{+}n_{\lie m_f} = 2$, we have $(n_{\cal H_f^F},n_{\lie m_f}) \in \{(1,1), (2,0), (0,2)\}$ and the only possibilities for $\Ricci_{2,\mathrm{SU}(3)f}$ are
\begin{align*}
    (1,1)~ \Ricci_{2,\mathrm{SU}(3)f}(X_F+U^*) &= \frac{3}{4}\|[X_F,e_j^F]^{\lie m_f}\|_Q^2 + \frac{1}{4}\|[v_k(0),U]\|_Q^2,~\forall j\in \{1,\ldots,\dim \cal H^F_f\},~\forall k\in \{1,\ldots,\dim \lie m_f\}\\
     (2,0)~\Ricci_{2,\mathrm{SU}(3)f}(X_F+U^*) &= \frac{3}{4}\|[X_F,e_j^F]^{\lie m_f}\|_Q^2+ \frac{3}{4}\|[X_F,e_k^F]^{\lie m_f}\|_Q^2,~\forall j,k\in \{1,\ldots,\dim \cal H^F_f\}\\
     (0,2)~\Ricci_{2,\mathrm{SU}(3)f}(X_F+U^*) &= \frac{1}{4}\|[v_l(0),U]\|_Q^2 + \frac{1}{4}\|[v_k(0),U]\|_Q^2,~\forall j, k \in \{1,\ldots,\dim \lie m_f\}
\end{align*}
with $X_F\perp e_j^F$ for every $j$, $U^*\perp v_k(0)$ for every $k$ and $\|X_F\| = \frac{1}{\sqrt{2}} = \|U^*\|$ -- recall Definition \ref{def:lm}.
\fi
 As normal homogeneous spaces, we have the only possibilities for $\mathrm{SU}(3)/H$:
\begin{enumerate}[(i)]
    \item  $\mathrm{SU}(3)/\mathrm{S}(\mathrm{U}(2)\times\mathrm{U}(1)) \cong \mathrm{SU}(3)/\mathrm{U}(2)$;~ $\mathrm{SU}(3)/\mathrm{SO}(3)$ 
    \item $\mathrm{SU}(3)/H$ where its Riemannian covering is $\mathrm{SU}(3)$ via a finite ramified covering map (that is, with discrete fibers of finite cardinality)
    \item $\mathrm{SU}(3)/\mathrm{T}^2$.
\end{enumerate} 
According to Proposition 3.1 in \cite{DGM} we get that if $\lie{su}(3) = \lie m\oplus \lie h$ is the standard reductive decomposition of $\mathrm{SU}(3)/H$ the minimum value of $d$ yielding positive $\Ricci_{d,\mathrm{SU}(3)/H}$ at the normal homogeneous space metric is given by
\[\max_{x\in \lie m\setminus\{0\}}\dim Z_{\lie m}(x)\]
where $Z_{\lie m}(x) = \{y \in \lie m : [x,y]=0\}$. Remark 3.6 in \cite{DGM} verifies the claim for the homogeneous spaces in (ii) observing that $\mathrm{SU}(3)$ admits $\Ricci_2 > 0$; while Theorem \ref{ithm:main} ensures the result for the homogeneous space in item (iii). Item (i) follows from the brackets computed in Table \ref{table:1} with the definition of $Z_{\lie m}(x)$.

Finally, for the claim on every non-principal orbit being a fixed point, observe that if $\mathrm{SU}(3)f\cong \mathrm{SU}(3)/\overline H$ is a non-principal orbit, it has \emph{different orbit type} than a principal orbit, see \cite[Section 3.5]{alexandrino2015lie}. Suppose $H$ is a principal isotropy subgroup, i.e., $\mathrm{SU}(3)/H$ is a principal orbit. In that case, $H$ is conjugate to a closed subgroup of $\overline H < \mathrm{SU}(3)$, what contradicts $H$ being a closed maximal subgroup of $G$ unless $\overline H = \mathrm{SU}(3)$. \qedhere
\end{proof}
\begin{remark}
    According to Theorem D in \cite{DGM}, or as compiled in Table 3 in the same reference, some of the homogeneous spaces described in Lemma \ref{lem:propersubgroups} do not admit metrics with $\Ricci_2 > 0$ when seen as a symmetric space. Observe, however, that our result does not contradict Proposition 3.8 in \cite{DGM}.
\end{remark}
\begin{lemma}\label{lem:blowingup}
Let $F$ as in the hypotheses of Theorem \ref{thm:associated}. Then for any $\mathrm{SU}(3)$-invariant Riemannian metric $\ga_F$ and every $f \in F\setminus F^{reg}$ there exists a non-zero vector $X_F+U^* \in T_fF$ such that $\Ricci_{\ga_F,d_F}(X_F+U^*)$ is arbitrarily large for any $d_F\geq 2$ after a finite Cheeger deformation of $\ga_F$.
\end{lemma}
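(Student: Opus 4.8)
The plan is to reduce the assertion to a curvature computation at a fixed point and then to an elementary eigenvalue estimate. By Lemma~\ref{lem:propersubgroups}, the hypothesis on $F$ forces every non-principal orbit of the $\mathrm{SU}(3)$-action to be a fixed point, so $\mathrm{SU}(3)f=\{f\}$ and there is a well-defined slice representation $\rho$ of $\mathrm{SU}(3)$ on $V:=T_fF$. The first step I would carry out is to record, combining Lemma~\ref{lem:propersubgroups} with the list of maximal closed subgroups of $\mathrm{SU}(3)$ appearing there and the fact that $\mathrm{SU}(3)$ admits no nontrivial real representation of dimension less than $6$, that the representation $V$ has no nonzero fixed vector and is of cohomogeneity one, so that
\begin{equation*}
\dim\bigl(\mathrm{SU}(3)\!\cdot\!w\bigr)=\dim V-1\qquad\text{for every }w\in V\setminus\{0\}.
\end{equation*}

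The second step is to compute what a Cheeger deformation does to the curvature at $f$. Write $(\ga_F)_t$ for the Cheeger deformation of $\ga_F$ relative to $\mathrm{SU}(3)$ and a fixed bi-invariant metric $Q$ on $\lie{su}(3)$, realised as the base metric of the Riemannian submersion $\bigl(\mathrm{SU}(3)\times F,\ \tfrac{1}{t}\,Q\oplus\ga_F\bigr)\to F$, $(g,q)\mapsto gq$. Since the $\mathrm{SU}(3)$-orbit through a point over $f$ is trivial, $(\ga_F)_t$ coincides with $\ga_F$ on $T_fF$, so that only the curvature is affected; the horizontal lift of a vector field near $f$ differs from its naive lift by a term vanishing to first order at $f$ and governed by the linearised action, whence an O'Neill computation (equivalently, the Cheeger-deformation curvature formula underlying Theorem~\ref{thm:secnew}; see also \cite{Cavenaghi2023,Cavenaghi2022}) gives, for $X,Y\in T_fF$,
\begin{equation*}
K_{(\ga_F)_t}(X,Y)=K_{\ga_F}(X,Y)+\frac{3\,t\,\|\mu_X(Y)\|_Q^2}{\|X\|^2\|Y\|^2-\ga_F(X,Y)^2},
\end{equation*}
where $\mu_X(Y)\in\lie{su}(3)$ is defined by $Q(\mu_X(Y),\xi)=\ga_F(\rho_*(\xi)X,Y)$ for all $\xi\in\lie{su}(3)$. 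The essential features are that the correction term is linear in $t$ and, as a function of the two-plane, positive semidefinite.

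The third step is the eigenvalue estimate. Fix any nonzero $X_F\in T_fF$ (this is the vector of the statement, with $U^*=0$ because $f$ is fixed), and let $A_{X_F}$ be the $\ga_F$-symmetric operator on $T_fF$ with $\ga_F(A_{X_F}Z,Z)=\|\mu_{X_F}(Z)\|_Q^2$. Then $A_{X_F}$ is positive semidefinite with image $\mathrm{span}\{\rho_*(\xi)X_F:\xi\in\lie{su}(3)\}$, which by Step~1 has dimension $\dim T_fF-1$; hence its eigenvalues are $\lambda_1=0<\lambda_2\le\cdots\le\lambda_{\dim T_fF}$. For orthonormal $e_1,\dots,e_{d_F}$ (orthonormal for $(\ga_F)_t$ at $f$, hence for $\ga_F$) with $2\le d_F\le\dim F-1$, the min--max inequality yields $\sum_{i=1}^{d_F}\ga_F(A_{X_F}e_i,e_i)\ge\lambda_1+\cdots+\lambda_{d_F}=\lambda_2+\cdots+\lambda_{d_F}>0$, so that, using $\|X_F\|^2-\ga_F(X_F,e_i)^2\le\|X_F\|^2$,
\begin{equation*}
\Ricci_{(\ga_F)_t,d_F}(X_F)=\sum_{i=1}^{d_F}K_{(\ga_F)_t}(X_F,e_i)\ge -C+\frac{3t}{\|X_F\|^2}\bigl(\lambda_2+\cdots+\lambda_{d_F}\bigr),
\end{equation*}
where $C:=-\inf\sum_{i=1}^{d_F}K_{\ga_F}(X_F,e_i)$ is a finite constant depending only on $\ga_F$ and $X_F$, neither on $t$ nor on the chosen companions. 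Taking the deformation parameter $t$ large enough (but finite) makes the right-hand side exceed any prescribed bound, uniformly in $e_1,\dots,e_{d_F}$, which is the assertion.

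The main obstacle is the second step: establishing the precise shape of the curvature correction at a fixed point, and in particular its positive semidefiniteness and linear growth in the deformation parameter. Step~1 also demands care, since the clean threshold $d_F\ge 2$ hinges on the representation-theoretic fact — extracted from Lemma~\ref{lem:propersubgroups} — that the slice representation at a non-principal point is of cohomogeneity one with trivial fixed subspace, which is exactly what forces $\dim\ker A_{X_F}=1$ and hence $\lambda_2>0$.
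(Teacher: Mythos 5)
Your Steps 2--3 run on the same engine as the paper's sketch: at a fixed point the Cheeger deformation leaves the metric at $T_fF$ unchanged but adds to each sectional curvature a non-negative correction growing linearly in $t$ and governed by the linearised action --- this is precisely the blow-up of the $z_t$-term (Lemma 3.5 and Proposition 3.4 of \cite{Cavenaghi2023}) that the paper invokes --- and the Ky Fan bound is a clean way to sum over $d_F$ companions. The genuine gap is Step 1. The claim that the slice representation $V=T_fF$ is of cohomogeneity one does not follow from Lemma \ref{lem:propersubgroups} together with the absence of nontrivial real $\mathrm{SU}(3)$-representations of dimension less than $6$; in fact it cannot hold under the hypotheses of Theorem \ref{thm:associated}. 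A cohomogeneity-one orthogonal representation has compact orbits of codimension one lying inside round spheres, hence is transitive on spheres, and the only sphere-transitive representation of $\mathrm{SU}(3)$ is the standard one on $\C^3\cong\R^6$, whose principal isotropy is $\mathrm{SU}(2)$ --- not a maximal closed subgroup. But by the slice theorem the principal isotropy of the slice representation at a fixed point coincides with the principal isotropy of $F$, which is maximal by hypothesis. So the very fact you say the threshold $d_F\geq 2$ hinges on (``$\dim\ker A_{X_F}=1$, hence $\lambda_2>0$'') is unavailable.

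Without it, the argument breaks at the quantifier that matters. In general $\ker A_{X_F}$ is the orthogonal complement of $T_{X_F}\bigl(\mathrm{SU}(3)\cdot X_F\bigr)$ in $V$ and may be large; as soon as the orbit of $X_F$ has codimension at least $3$, one can choose two orthonormal companions inside $\ker A_{X_F}\cap X_F^{\perp}$, and for that choice the $t$-linear gain vanishes, so $\Ricci_{2}$ is not made large. Since Definition \ref{def:lm} is a ``for every choice of companions'' condition, and the lemma is used in the proof of Proposition \ref{prop:firstpart} to rule out the nonpositivity inequality \eqref{ineq1} for vectors one does not get to choose, this uniform reading is the one your estimate must deliver, and Step 1 was your only source of uniformity. (If only ``for some companions'' were intended, Step 1 would be superfluous: the slice representation is nontrivial --- otherwise nearby orbits would be points and the principal isotropy would be all of $\mathrm{SU}(3)$ --- so some $e$ with $\mu_{X_F}(e)\neq 0$ always exists.) The paper's own proof is only a sketch and sidesteps this by asserting the existence of vectors on which the $z_t$-term blows up; to repair your version you would need to replace Step 1 by an honest analysis of which slice representations are compatible with the maximal-isotropy hypothesis and of how large $\ker A_{X_F}$ can then be, rather than the cohomogeneity-one shortcut.
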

\begin{proof}[Sketch of the proof]
Since points belonging to non-principal orbits for the $\mathrm{SU}(3)$ are fixed points (Lemma \ref{lem:blowingup}), for each of such, we can always pick $X_F+U^*,~Y_F+V^*\in T_fF$ such that the $z_t$-term in a Cheeger deformation (Lemma 3.5 in \cite{Cavenaghi2023}) blows up as $t\nearrow +\infty$ when computed in such elements, as in Proposition 3.4 in \cite{Cavenaghi2023}. We can conclude the claim by collapsing $F$ to a point in Theorem \ref{thm:secnew} since such a formula reduces to the ones usually employed in Cheeger deformations, such as Proposition 1.3 in \cite{mutterz}. Compare, for instance, with the proof of Theorem C in \cite{Cavenaghi2023}.\qedhere
\end{proof}

\subsection{The proof of Theorem \ref{thm:associated}}
We finally prove Theorem \ref{thm:associated}. We do this by combining the well-known quadratic trick, employed similarly in \cite[Theorem 1.6]{CavenaghiSperanca22}, with a family of metrics obtained as in Proposition \ref{prop:firstpart}. To know, given any invariant positively curved Riemannian metric $\ga_B$ on $\mathrm{SU}(3)/\mathrm{T}^2$, we consider on $M$ the metric with totally geodesic fibers given by Proposition 2.7.1 in \cite{gw}, assuming that the $\mathrm{SU}(3)$-invariant metric on $F$ induces a Riemannian metric in $F^{reg}/\mathrm{SU}(3)$ that has positive $\Ricci_{d_F}$. We can do more indeed; consider a curve of Riemannian metrics $\ga_B(t)$ in $B = \mathrm{SU}(3)/\mathrm{T}^2$ as solutions of the projected homogeneous Ricci flow with initial condition $\ga_{B}(t = 0) = \ga_B$ and employ Proposition 2.7.1 in \cite{gw} to build a family of connection metrics $\ga(t)$ on $M$ fixing an initial choice of Riemannian metric $\ga_F$ on the fiber.

Following Theorem 1.6 in \cite{CavenaghiSperanca22}, we performed a $s$-canonical variation of $\ga_t$, namely, we make $\ga_t\Big|_{\cal H\times \cal H} + e^{2s}\ga_t\Big|_{\cal V\times \cal V} = (\ga_t)_s := \ga_{t,s}$, and show that: There is $t > 0$ such that for any $s < 0$ arbitrarily small, there is a unit vector $\widetilde X = X + X_F + U^*$ for $X\perp X_F + U^*$ satisfying $\Ricci_{d_F+1}(\widetilde X) \leq 0$. 

As a first step, observe that Proposition \ref{prop:firstpart} ensures that for any $0 \leq t \ll 1$, we have that $\ga$ has positive $\Ricci_{d_F+1}$. To conclude our goal, take $\lambda \in \bb R$ and assume that $X, X_F+U^*$ are mutually orthonormal. We expand the polynomial
$p_{t,s}(\lambda) = \Ricci_{\ga_{t,s}}(X+\lambda (X_F + U^*))$, which is 2-degree in $\lambda$. We show that there is $t > 0$ such that for any $s < 0$ arbitrarily small, the discriminant of $p_{t,s}$ is non-positive for some $X\perp X_F + U^*$ with $\|X\| = \|X_F + U^*\| = 1$. 

Take $t^* > 0$ defined by $t^* := \inf\{t \geq 0 : \exists X \in T^1\mathrm{SU}(3)/\mathrm{T}^2 : \Ricci_{\ga_B(t),1}(X) \leq 0\}$, which exists, see for instance Figure \ref{fig:sec}. We show that for any $t>t^*$, no $s < 0$ arbitrarily small preserves the positivity of $\Ricci_{d_F+1} $. Indeed, the limit $s\rightarrow -\infty$ leads to the following discriminant of $p_{t,s}(\lambda)$ (see the proof of Theorem 1.6 in \cite{CavenaghiSperanca22})
\[\Delta_{t,-\infty,\lambda} = \Ricci_{n_B,t}(X)\Ricci_{n_F,\ga_F}(X_F+U^*),\]
where $n_F + n_B = d_F$. Since $t>t^*$, pick $n_B = 1$ and $X$ such that $\Ricci_{1,t}(X)  = 0$. We have that $\Delta_{t,-\infty,\lambda} = 0$ and $p_{t,-\infty}(\lambda) = \Ricci_{n_B,t}(X) + \lambda^2\Ricci_{n_F,\ga_F}(X_F+U^*)$, taking $\lambda = 0$ finishes the proof.

\section*{Acknowledgements}
The São Paulo Research Foundation (FAPESP) supports L. F. C  grant 2022/09603-9, partially supports L. G grants 2021/04003-0, 2021/04065-6, partially supports R. M. M. grants 2021/08031-9, 2018/03338-6, and partially supports D. D. N. grants 2018/03338-6, 2019/10269-3, and 2022/09633-5. The National Council for Scientific and Technological Development (CNPq) partially supports R. M. M. grants  315925/2021-3 and 434599/2018-2 and partially supports D. D. N. grant 309110/2021-1.

\iffalse
Teorema para colocar em algum paper, não necessariamente neste:

\begin{theorem}
Let $d$ be a positive integer less than or equal to the manifold's dimension. Assume that there exist $v=(v_1,\ldots,v_n)\in\mathbb{N}^n$, such that $v_1+\cdots+v_n=d$, and $x_0\in\R^n$ with positive coordinates such that 
\[
\langle \Ricci(x_0),v\rangle=0\,\,\text{ and }\,\,\langle (d \Ricci(x_0))^T \Ricci(x_0),v\rangle\neq0.
\]
$t\mapsto\langle \Ricci(F_t(x_0)),v\rangle$ changes its sign for $t$ near $0$.
\end{theorem}
\fi

	\bibliographystyle{alpha}

	\bibliography{main}

\newcommand{\etalchar}[1]{$^{#1}$}
\begin{thebibliography}{DVG{\'A}M22}

\bibitem[AB15]{alexandrino2015lie}
M.~M. Alexandrino and R.~G. Bettiol.
\newblock {\em Lie Groups and Geometric Aspects of Isometric Actions}.
\newblock Springer International Publishing, 2015.

\bibitem[AFG12]{forger}
F.~Antoneli, M.~Forger, and P.~Gaviria.
\newblock {Maximal Subgroups of Compact Lie Groups}.
\newblock {\em Journal of Lie Theory}, 22(4):949--1024, 2012.

\bibitem[AL19]{MR3957836}
Romina~M. Arroyo and Ramiro~A. Lafuente.
\newblock The long-time behavior of the homogeneous pluriclosed flow.
\newblock {\em Proc. Lond. Math. Soc. (3)}, 119(1):266--289, 2019.

\bibitem[AN16]{Abiev2016}
Nurlan~Abievich Abiev and Yuri{\u{\i}}~Gennadievich Nikonorov.
\newblock {The evolution of positively curved invariant Riemannian metrics on
  the Wallach spaces under the Ricci flow}.
\newblock {\em Annals of Global Analysis and Geometry}, 50(1):65--84, Jul 2016.

\bibitem[Bes87]{besse}
A.~Besse.
\newblock {\em Einstein manifolds, Springer}.
\newblock 1987.

\bibitem[BFF20]{MR4151344}
Leonardo Bagaglini, Marisa Fern\'{a}ndez, and Anna Fino.
\newblock Laplacian coflow on the 7-dimensional {H}eisenberg group.
\newblock {\em Asian J. Math.}, 24(2):331--354, 2020.

\bibitem[BL19]{MR3964154}
Christoph B\"{o}hm and Ramiro~A. Lafuente.
\newblock The {R}icci flow on solvmanifolds of real type.
\newblock {\em Adv. Math.}, 352:516--540, 2019.

\bibitem[BLP22]{elauret}
Renato~G. Bettiol, Emilio~A. Lauret, and Paolo Piccione.
\newblock Full {Laplace} spectrum of distance spheres in symmetric spaces of
  rank one.
\newblock {\em Bulletin of the London Mathematical Society}, 54(5):1683--1704,
  2022.

\bibitem[BW07]{BW}
Christoph B\"{o}hm and Burkhard Wilking.
\newblock Nonnegatively curved manifolds with finite fundamental groups admit
  metrics with positive {R}icci curvature.
\newblock {\em Geom. Funct. Anal.}, 17(3):665--681, 2007.

\bibitem[BWZ]{bwz}
C.~B\"ohm, M.~Wang, and W.~Ziller.
\newblock {A variational approach for compact homogeneous Einstein manifolds}.
\newblock {\em Geom. Funct. Anal.}, 14:681--733.

\bibitem[CeSS23]{Cavenaghi2023}
Leonardo~F. Cavenaghi, Renato J.~M. e~Silva, and Llohann~D. Speran{\c{c}}a.
\newblock Positive {Ricci} curvature through cheeger deformations.
\newblock {\em Collectanea Mathematica}, Feb 2023.

\bibitem[CGM23]{cavenaghi2023dynamics}
Leonardo~F. Cavenaghi, Lino Grama, and Ricardo~M. Martins.
\newblock On the dynamics of positively curved metrics on
  $\mathrm{SU}(3)/\mathrm{T}^2$ under the homogeneous ricci flow.
\newblock {\em arXiv eprint 2305.06119}, 2023.

\bibitem[CGS22]{Cavenaghi2022}
Leonardo~F. Cavenaghi, Lino Grama, and Llohann~D. Speran{\c{c}}a.
\newblock The concept of {Cheeger} deformations on fiber bundles with compact
  structure group.
\newblock {\em S{\~a}o Paulo Journal of Mathematical Sciences}, Nov 2022.

\bibitem[Che73]{cheeger}
J.~Cheeger.
\newblock Some examples of manifolds of nonnegative curvature.
\newblock {\em J. Diff. Geom.}, 8:623--628, 1973.

\bibitem[CS22]{CavenaghiSperanca22}
Leonardo~Francisco Cavenaghi and Llohann~Dallagnol Sperança.
\newblock Positive {Ricci} curvature on fiber bundles with compact structure
  group.
\newblock {\em Advances in Geometry}, 22(1):95--104, 2022.

\bibitem[CW12]{wallachrelated}
Man-Wai Cheung and Nolan Wallach.
\newblock Ricci flow and curvature on the variety of flags on the two
  dimensional projective space over the complexes, quaternions and octonions.
\newblock {\em Proceedings of the American Mathematical Society}, 143, 06 2012.

\bibitem[CW20]{crowley2020intermediate}
Diarmuid Crowley and David Wraith.
\newblock Intermediate curvatures and highly connected manifolds.
\newblock {\em arXiv eprint 1704.07057}, 2020.

\bibitem[DVG{\'A}M22]{DGM}
Miguel Dom{\'i}nguez-V{\'a}zquez, David Gonz{\'a}lez-{\'A}lvaro, and Lawrence
  Mouill{\'e}.
\newblock Infinite families of manifolds of positive $k$-th intermediate
  {Ricci} curvature with $k$ small.
\newblock {\em Mathematische Annalen}, Aug 2022.

\bibitem[FR21]{MR4244880}
Anna Fino and Alberto Raffero.
\newblock A class of eternal solutions to the {$\rm G_2$}-{L}aplacian flow.
\newblock {\em J. Geom. Anal.}, 31(5):4641--4660, 2021.

\bibitem[GAZ23]{davide}
David Gonz\'alez-\'Alvaro and Masoumeh Zarei.
\newblock Positive intermediate curvatures and {Ricci} flow.
\newblock {\em arXiv eprint 2303.08641}, 2023.

\bibitem[GMPa{\etalchar{+}}22]{proj-ricci-flow}
L.~Grama, R.~M. Martins, M.~Patr\~{a}o, L.~Seco, and L.~Speran\c{c}a.
\newblock The projected homogeneous {R}icci flow and its collapses with an
  application to flag manifolds.
\newblock {\em Monatsh. Math.}, 199(3):483--510, 2022.

\bibitem[GW09]{gw}
D.~Gromoll and G.~Walshap.
\newblock {\em Metric Foliations and Curvature}.
\newblock Birkhäuser Verlag, Basel, 2009.

\bibitem[KM23]{kennard2023positive}
Lee Kennard and Lawrence Mouillé.
\newblock Positive intermediate {Ricci} curvature with maximal symmetry rank.
\newblock {\em arXiv eprint 2303.00776}, 2023.

\bibitem[KN69]{KN}
S.~Kobayashi and K.~Nomizu.
\newblock {\em Foundations of Differential Geometry}.
\newblock Number v. 1 in Interscience tracts in pure and applied mathematics.
  Interscience, 1969.

\bibitem[Lau17]{MR3653239}
Jorge Lauret.
\newblock Laplacian flow of homogeneous {$G_2$}-structures and its solitons.
\newblock {\em Proc. Lond. Math. Soc. (3)}, 114(3):527--560, 2017.

\bibitem[LPV20]{MR4105515}
Ramiro~A. Lafuente, Mattia Pujia, and Luigi Vezzoni.
\newblock Hermitian curvature flow on unimodular {L}ie groups and static
  invariant metrics.
\newblock {\em Trans. Amer. Math. Soc.}, 373(6):3967--3993, 2020.

\bibitem[Mou]{lm}
Lawrence Mouill\'e.
\newblock Intermediate {Ricci} curvature.
\newblock {\em Website
  https://sites.google.com/site/lgmouille/research/intermediate-ricci-curvature}.

\bibitem[Mou22]{Mouille2022}
Lawrence Mouill\'e.
\newblock Local symmetry rank bound for positive intermediate {Ricci}
  curvatures.
\newblock {\em Geometriae Dedicata}, 216(2):23, Mar 2022.

\bibitem[RW22a]{reiser2022intermediate}
Philipp Reiser and David~J. Wraith.
\newblock Intermediate {Ricci} curvatures and {Gromov's Betti} number bound.
\newblock {\em arXiv eprint 2208.13438}, 2022.

\bibitem[RW22b]{reiser2022positive}
Philipp Reiser and David~J. Wraith.
\newblock Positive intermediate {Ricci} curvature on fibre bundles.
\newblock {\em arXiv eprint 2211.14610}, 2022.

\bibitem[SM21]{San_Martin_2021}
Luiz~A.B. San~Martin.
\newblock {\em Lie Groups}.
\newblock Springer International Publishing, 2021.

\bibitem[ST54]{shephard_todd_1954}
G.~C. Shephard and J.~A. Todd.
\newblock Finite unitary reflection groups.
\newblock {\em Canadian Journal of Mathematics}, 6:274–304, 1954.

\bibitem[SW15]{searle2015lift}
C.~Searle and F.~Wilhelm.
\newblock How to lift positive {R}icci curvature.
\newblock {\em Geometry \& Topology}, 19(3):1409--1475, 2015.

\bibitem[Val89]{valentiner_1889}
H.~Valentiner.
\newblock {\em {De endelige Transformations-Gruppers Theori}}, volume~2.
\newblock Bianco Lunos Bogtrykkeri, Jun 1889.

\bibitem[WZ]{wz}
M.~Wang and W.~Ziller.
\newblock Existence and non-existence of homogeneous einstein metrics.
\newblock {\em Invent. Math.}, 84.

\bibitem[Zil]{mutterz}
W.~Ziller.
\newblock On {M}. {M}\"uter’s {P}h{D} thesis.
\newblock https://www.math.upenn.edu/~wziller/papers/SummaryMueter.pdf.

\end{thebibliography}

\end{document}